\documentclass[a4paper, 10pt,leqno]{amsart}
\date{\today}
\usepackage[dvipdfmx]{graphicx}
\usepackage{latexsym}
\usepackage{amssymb}
\usepackage{amsmath}
\usepackage{amscd}
\usepackage{amsthm}
\usepackage{amsfonts}
\usepackage{mathrsfs}
\usepackage{caption}
\usepackage{enumerate}
\usepackage{enumitem}
\usepackage{comment}
\usepackage{bm}
\usepackage[usenames]{color}
\usepackage[active]{srcltx}
\usepackage{comment}
\usepackage[bbgreekl]{mathbbol}
\usepackage{here}
\usepackage[subrefformat=parens]{subcaption}
  \DeclareSymbolFontAlphabet{\mathbb}{AMSb}
  \DeclareSymbolFontAlphabet{\mathbbl}{bbold}
  \DeclareMathSymbol{\bbepsilon}{\mathord}{bbold}{"0F}
\title{Minimal total absolute curvature for equiaffine immersions}

\author[Y.~Yamauchi]{Yuta Yamauchi}
\address{%
   Graduate School of Engineering Science, 
   Yokohama National University, 
   Hodogaya, Yokohama 240-8501, Japan
}
\email{yamauchi-yuta-hj@ynu.jp}

\subjclass[2020]{%
Primary 53A15; 
Secondary 53C65, 
53C42, 
53C40
}
\keywords{%
Chern--Lashof theorem,
total absolute curvature,
equiaffine immersion,
convex hypersurface
}

\usepackage{amsthm}
\theoremstyle{plain}
 \newtheorem{theorem}{Theorem}[section]
 \newtheorem{introtheorem}{Theorem}

 \newtheorem{proposition}[theorem]{Proposition}
 \newtheorem{fact}[theorem]{Fact}
 \newtheorem*{fact*}{Fact}
 \newtheorem{lemma}[theorem]{Lemma}
 
 \theoremstyle{remark}
 \newtheorem{definition}[theorem]{Definition}
 \newtheorem{remark}[theorem]{Remark}
 \newtheorem*{acknowledgements}{Acknowledgements}
 
 \newtheorem{example}[theorem]{Example}
\numberwithin{equation}{section}

 \makeatletter
    
    \@addtoreset{equation}{section}
  \makeatother

\newcommand{\bmath}[1]{\boldsymbol {#1}}

    \newcommand{\vect}[1]{\bmath{#1}}
\newcommand{\R}{{\boldsymbol R}}
\newcommand{\E}{{\boldsymbol E}}

\begin{document}

\begin{abstract}    
Koike (2001) defined the Lipschitz--Killing curvature and established a Chern--Lashof type inequality 
for equiaffine immersions of arbitrary codimensions. 
In this paper, we study the equality case.  
We prove that the total absolute curvature of an $n$-dimensional equiaffine immersion 
is equal to $2$ if and only if the image is a convex hypersurface embedded 
in an $(n+1)$-dimensional affine subspace.  

\end{abstract}

\maketitle


\section{Introduction}

Affine differential geometry, founded by 
Blaschke \cite{Bl}, 
is a natural generalization 
of Euclidean geometry.
A rich body of research 
has been accumulated up to the present day
(see \cite{Cal,Li,Mar,Matsu} for example). 
Recently, affine geometric viewpoints have attracted renewed attention
through interactions with information geometry, such as dual connections and
statistical manifolds \cite{Ama, AN}.


For equiaffine immersions, 
Koike \cite{Koi1} introduced
the Lipschitz--Killing curvatures and the total absolute curvature (see Definition~\ref{def:LK-cur} and Definition~\ref{def:TAC}).
Roughly speaking, the total absolute curvature measures
the global bending of the immersion in the ambient space.
Koike proved that a Chern--Lashof type theorem holds in the equiaffine
setting.

\begin{fact}[\cite{Koi1}]\label{fa:Koi}
Let $(M,\nabla,\theta)$ be an oriented compact $n$-dimensional manifold $M$ 
with an equiaffine structure $(\nabla, \theta)$, and let
$f:(M,\nabla,\theta) \to (\R^{n+r},\tilde{\nabla}, \omega)$ 
be an equiaffine immersion into an $(n+r)$-dimensional affine space $\R^{n+r}$
with the natural equiaffine structure $(\tilde{\nabla}, \omega)$.
\begin{itemize}
\item[$(1)$]
The total absolute curvature $\tau_S(f)$ of $f$ with respect to a unit
ellipsoid $S$ satisfies
\[
\tau_S(f) \geq \sum_{k=0}^n b_k(M,\boldsymbol{F}),
\]
where $b_k(M,\boldsymbol{F})$ denotes the $k$-th Betti number of $M$ with
respect to an arbitrary coefficient field $\boldsymbol{F}$.
\item[$(2)$]
If $\tau_S(f) < 3$, then $M$ is homeomorphic to the $n$-dimensional sphere.
\end{itemize}
\end{fact}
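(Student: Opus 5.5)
The plan is to reduce both statements to Morse theory of linear height functions, exactly as in the classical Chern--Lashof argument. The first task is to establish a Kuiper-type integral-geometric formula: the total absolute curvature $\tau_S(f)$ is the average, over covectors $\xi$ on the unit ellipsoid $S$ (viewed in the dual space $(\R^{n+r})^*$), of the number of critical points of the height function $h_\xi=\xi\circ f:M\to\R$. I expect this identification to be essentially built into Koike's Definitions~\ref{def:LK-cur} and~\ref{def:TAC}, but it needs to be made precise.

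\textbf{Step 1 (conormal bundle and Gauss map).} I will consider the conormal bundle
\[
N^*=\{(p,\xi)\in M\times S : \xi\circ df_p=0\}.
\]
This is an $(n+r-1)$-dimensional manifold, a sphere bundle over $M$ with fibres $S^{r-1}$. It carries the projection $\nu:N^*\to S$, $\nu(p,\xi)=\xi$. A point $p$ is critical for $h_\xi$ exactly when $(p,\xi)\in N^*$, so $\#\nu^{-1}(\xi)=\#\mathrm{Crit}(h_\xi)$.

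Next, using the equiaffine data (the volume forms $\theta$ on $M$ and $\omega$ on $\R^{n+r}$, and the volume element on $S$ induced by $\omega$), I will compute the Jacobian of $\nu$. The computation in a local frame should show that
\[
\nu^*d\sigma_S=\det(A_\xi)\,\theta\wedge d\sigma_{S^{r-1}},
\]
where $A_\xi$ is the affine shape operator (the Hessian of $h_\xi$ at a critical point). Up to normalization, $\det(A_\xi)$ is Koike's Lipschitz--Killing curvature $G(p,\xi)$.

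\textbf{Step 2 (area formula).} By the area formula, $\tau_S(f)$, being the normalized integral of $|G|$ over $N^*$, equals
\[
\frac{1}{\mathrm{vol}(S)}\int_S\#\nu^{-1}(\xi)\,d\sigma_S(\xi).
\]
By Sard's theorem, almost every $\xi$ is a regular value of $\nu$. This is equivalent to $h_\xi$ being a Morse function, since the Hessian of $h_\xi$ at a critical point is $A_\xi$.

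\textbf{Step 3 (conclusions).} For a regular value $\xi$, the weak Morse inequalities give $\#\mathrm{Crit}(h_\xi)\ge\sum_k b_k(M,\boldsymbol F)$ for any field $\boldsymbol F$. Integrating over $S$ yields (1).

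For (2), $\#\mathrm{Crit}(h_\xi)$ is finite and at least $2$ for every regular $\xi$, since $M$ is compact and $h_\xi$ attains a maximum and a minimum. If $\tau_S(f)<3$, the average is below $3$, so some regular $\xi$ has $h_\xi$ Morse with exactly two critical points. Reeb's theorem then shows that $M$ is homeomorphic to $S^n$.

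The main obstacle I expect is Step 1. One must check that the affine invariants (transversal bundle choice, the volume $\theta$, and the induced measure on the ellipsoid $S$) combine so that the Jacobian of $\nu$ is precisely Koike's Lipschitz--Killing curvature, independent of the chosen transversal frame. In particular, the normalization must make the constant in front of the counting integral equal to $1/\mathrm{vol}(S)$. My first attempt would be to choose an equiaffine transversal frame adapted to $\xi$ (with $\xi$ vanishing on $df(TM)$ and on all but one transversal vector) and compute $d\nu$ directly by the affine Weingarten formula. Once that formula is in hand, everything else is standard Morse theory.
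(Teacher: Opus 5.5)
Your proposal is correct and follows essentially the route the paper relies on; the Fact itself is quoted from Koike rather than proved here. The Jacobian identity you plan to derive in Step 1 is Koike's Proposition 2.2, recorded in the paper as \eqref{eq:pullback}. Together with the correspondence between critical points of $h_\phi$ and points of $\nu^{-1}(\phi)$, it gives the averaging formula \eqref{eq:average}, after which the weak Morse inequalities give (1) and Reeb's theorem gives (2). The only bookkeeping point is the normalization: your $\mathrm{vol}(S)$, measured with Koike's $\omega_S$, equals $\operatorname{vol}(\mathcal{S}^{n+r-1}(1))$, because $S$ is the image of the unit sphere under a linear map with $\omega'(\zeta_1,\dots,\zeta_{n+r})=1$.
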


Chern and Lashof proved in \cite{CL1,CL2} that the total absolute curvature of an
immersion of a compact manifold into Euclidean space is at least the sum of the Betti numbers and
is equal to $2$ if and only if the image is a convex hypersurface embedded in an
$(n+1)$-dimensional affine subspace.
For equiaffine immersions, Koike defined the total absolute curvature $\tau_S(f)$ with respect to a unit ellipsoid $S$. 
Unlike the Euclidean case, this quantity generally depends on the choice of $S$ (cf. \eqref{eq:ellipsoid}). 
Nevertheless, a remarkable observation due to Koike is that this dependence disappears 
when $\tau_S (f)$ is the minimal number of Morse critical points on $M$ (Fact~\ref{fa:min}).
In particular, if $\tau_S(f)=2$ for some unit ellipsoid $S$, then $\tau_S(f)=2$ for every unit ellipsoid. 
Thus, although $\tau_S(f)$ is not an intrinsic quantity in general, 
the condition $\tau_S(f)=2$ is intrinsic to the equiaffine immersion.

What remains unclear in Koike's work is the geometric meaning of this minimal value.
This naturally leads to the following question:
does the condition $\tau_S(f)=2$ characterize convex hypersurfaces also in the equiaffine setting?
The main result of this paper gives an answer to this question.
More precisely, we prove the following theorem.

\begin{introtheorem}\label{thm:main}

Let $(M,\nabla,\theta)$ be an oriented compact $n$-dimensional manifold $M$ 
with an equiaffine structure $(\nabla, \theta)$, and let
$f:(M,\nabla,\theta) \to (\R^{n+r},\tilde{\nabla}, \omega)$ 
be an equiaffine immersion into an $(n+r)$-dimensional affine space $\R^{n+r}$
with the natural equiaffine structure $(\tilde{\nabla}, \omega)$.
If the total absolute curvature $\tau_S (f)$ of $f$
is equal to $2$, then the image $f(M)$
is a convex hypersurface embedded in an $(n+1)$-dimensional affine subspace of $\R^{n+r}$.
The converse is also true.
\end{introtheorem}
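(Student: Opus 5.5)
The plan is to reduce the equiaffine statement to the classical Chern--Lashof situation by interpreting $\tau_S(f)$ as an averaged count of critical points of height functions. In Koike's setting, for a unit ellipsoid $S$ the total absolute curvature should be expressible, after suitable normalization, as an integral over the directions $\xi\in S$ (or the dual ellipsoid) of the number of critical points of the linear functions $h_\xi=\langle \xi, f\rangle$. This is the affine analogue of the Chern--Lashof formula; we take it from the definitions in Definition~\ref{def:LK-cur} and Definition~\ref{def:TAC} together with the proof of Fact~\ref{fa:Koi}. For almost every $\xi$, $h_\xi$ is a Morse function on the compact manifold $M$, so it has at least two critical points, a maximum and a minimum. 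Hence $\tau_S(f)\ge 2$, and equality forces $h_\xi$ to have \emph{exactly} two critical points for almost every $\xi$, and, by a continuity argument, at most two nondegenerate ones for every generic $\xi$. By Fact~\ref{fa:min} the condition is independent of $S$, so we are free to choose $S$ to be the Euclidean unit sphere for a fixed auxiliary inner product. This reduces the problem to a statement about linear height functions, which is affine-invariant.

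\textbf{Step 1: reduction to a hypersurface.} Let $A$ be the affine hull of $f(M)$, of dimension $m\le n+r$. We must show $m=n+1$. It cannot be $n$, since a compact $n$-manifold immersed in $\R^n$ would be open and closed, which is impossible. Suppose $m\ge n+2$. The classical Chern--Lashof step applies here: each height function with exactly two critical points has the property that every level set is connected, and almost every tangent hyperplane section is ``tight''. One then shows, as in Kuiper's tightness theory, that $f(M)$ lies on the boundary of its convex hull $K=\operatorname{conv} f(M)$. Through each point $p=f(x)$ there pass support hyperplanes of $K$. The set of normal directions at $x$ that are supporting has positive measure in the normal sphere bundle whenever $\tau=2$, because the tube-formula integral counts each critical point, and $\tau=2$ means every critical point is a global max or min. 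When the codimension in $A$ is at least two, the normal fibre has dimension at least one. In that case the supporting normal cone at each point is the full normal space at that point, and this forces $f(x)$ to be an extreme point with every normal direction supporting. Integrating over $M$ then makes $\tau$ too big, or contradicts $A$ being the affine hull. Concretely, I would argue that the image of the ``support'' part of the normal bundle under the Gauss-type map covers $S$ exactly twice, and that a full normal fibre of dimension $\ge1$ at an open set of points forces a covering degree larger than $2$ unless $m=n+1$.

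\textbf{Step 2: convexity and embeddedness.} Once $f(M)\subset A\cong\R^{n+1}$, $f$ is a hypersurface immersion, and each generic linear function has exactly two critical points. For a hyperplane $H$ in general position, $f^{-1}(H)$ separates $M$ into at most two pieces on each side, because a sublevel set of a two-critical-point Morse function is a ball. From this it follows that $f(M)$ meets each line in at most two points or a segment, which is the standard two-piece property. Then $f(M)=\partial K$ and $f$ is injective, giving an embedded convex hypersurface. The converse is direct: for a convex embedded hypersurface, every nondegenerate height function has exactly one max and one min. The integral formula then gives $\tau_S(f)=2$, since the normalization is such that the average count is the total over both antipodal preimages.

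The main obstacle is Step 1. We must justify the integral-geometric formula expressing Koike's $\tau_S(f)$, which is built from Lipschitz--Killing curvatures with respect to an ellipsoid and the affine normal data, as an average count of critical points of linear functions. We also need to handle the codimension reduction using the transversal bundle rather than a metric normal bundle. My first attempt would be to choose the transversal bundle orthogonal to $df(TM)$ for the inner product making $S$ round. I would then check that Koike's curvature form pulls back the volume of $S$ under the generalized Gauss map, so that the Euclidean Chern--Lashof argument transfers verbatim.
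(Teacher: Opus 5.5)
Your preliminary reduction is sound and already does more than you use. By \eqref{eq:average} and Fact~\ref{fa:min}, $\tau_S(f)=2$ holds exactly when every Morse height function has exactly two critical points. So the integral formula you list as the main obstacle is already available. You also do not need a metric-orthogonal transversal bundle, which in general would not induce the given $(\nabla,\theta)$.

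The genuine gap is Step~1, which is not actually carried out. First, the claim that for codimension at least two in $A$ ``the supporting cone at each point is the full normal space'' is unjustified. At best it holds at points where the Lipschitz--Killing curvature $G$ is not identically zero on the fibre, and even there it needs an argument. Second, the conclusion you draw from it, that $f(x)$ is an extreme point, is not the useful one. Third, the proposed contradiction via a ``covering degree larger than $2$'' has no mechanism behind it.

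The missing idea is the one in Lemma~\ref{lem:reduction}:
\begin{enumerate}
\item Take $\eta\in B_p$ with $G(\eta)\neq0$, which exists since $\tau_S(f)>0$.
\item Where $G\neq 0$ the Gauss map is a local diffeomorphism. So a critical point of $h_{\nu(\eta)}$ at $p$ that is neither a maximum nor a minimum would persist on an open set of regular values, giving Morse height functions with at least three critical points.
\item Because the codimension is at least two, you can rotate $\eta$ along a circle $\eta_t$ in the fibre with $\tilde h_{\nu(\eta_\pi)}$ a negative multiple of $\tilde h_{\nu(\eta_0)}$. This sign change and the intermediate value theorem (Lemma~\ref{lem:hyperplane}) show the following: unless $f(M)$ lies in one of the hyperplanes $f(p)+\Pi_t$, some $t$ has points of $f(M)$ strictly on both sides of $f(p)+\Pi_t$.
\item Since $G(\eta_t)$ is a positive function times a nonzero trigonometric polynomial in $t$, such a $t$ can also be chosen with $G(\eta_t)\neq 0$.
\end{enumerate}
So the contradiction comes from the affine hull, through a sign change on the connected normal fibre, not from a degree count.

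Step~2 is likewise asserted rather than proved. That sublevel sets of a two-critical-point Morse function are balls does not by itself give that every line meets $f(M)$ in at most two points or a segment. Nor does it give $f(M)=\partial K$ with $f$ injective; injectivity needs its own argument, since for $n=1$ a doubly covered oval has convex image. The paper instead proves that every tangent hyperplane is supporting. At regular points of $\nu$ this is direct. At points of $C$ it uses the limiting argument of Lemma~\ref{lem:nu_critical_point}: the supporting property is closed and $\nu(C)$ has measure zero.

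Your reduction could in fact let you bypass both steps. The condition ``every Morse linear height function has exactly two critical points'' is also equivalent to the Euclidean total absolute curvature being $2$ for any auxiliary inner product, because Euclidean height functions are the same linear functionals up to positive scaling. Hence the classical Chern--Lashof equality theorem applies directly in both directions. Working with linear functionals would also let you pass to the affine hull without the equiaffine bookkeeping of Proposition~\ref{prop:equiaffine_restrict}. That is a legitimate and much shorter route than the paper's deliberately metric-free proof. What does not work is the partial re-derivation of Chern--Lashof that you offer in its place.
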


To establish this characterization, two difficulties must be overcome.
The first problem is the \emph{reduction of codimension}. 
In the Euclidean case, a natural metric is induced on a subspace, 
and therefore the total absolute curvature is preserved when the codimension is reduced.
By contrast, in the affine case, 
a subspace does not generally inherit a canonical equiaffine structure 
and hence it is not clear whether the value of the total absolute curvature is preserved. 
The key point of this paper is that, 
even though an affine subspace does not inherit a canonical equiaffine structure, 
we construct an appropriate structure that preserves the total absolute curvature 
under reduction of codimension in Proposition~\ref{prop:equiaffine_restrict}.
The main technical device of this paper is Proposition~\ref{prop:equiaffine_restrict},
which shows that one can give a suitable affine subspace with an appropriate equiaffine structure
so that the restricted immersion remains equiaffine and still has total absolute curvature $2$.
This enables the reduction of codimension while preserving the minimality of the total absolute curvature.

The second problem is that
\emph{the proof of Chern and Lashof essentially relies on a metric structure}.
Because of this, their argument cannot be directly applied to affine spaces, which do not possess a metric. 
To overcome this, we develop a new proof that does not rely on any metric, 
using only the properties of height functions and the Gauss map.
As a byproduct, this approach also yields an alternative proof in the Euclidean case.

Our theorem shows that \emph{locally strictly convex hypersurfaces}  
---often assumed a priori in affine differential geometry---
can be characterized by a global curvature condition in arbitrary codimensions.
Furthermore, the minimal total absolute curvature completely characterizes convex hypersurfaces, 
even when they are not locally strictly convex.
In addition, our study does not require non-degeneracy of the affine fundamental form,
which motivates us to consider equiaffine immersions with degenerate affine
fundamental form.
Even for convex hypersurfaces,
the affine fundamental form may degenerate.
Nevertheless, such hypersurfaces can still carry
degenerate affine metrics with meaningful geometric properties
(Example~\ref{ex:semidefinite}).
This suggests that the framework of equiaffine geometry can be meaningfully
extended beyond the non-degenerate setting.
In Section~\ref{sec:example}, we present an example of a hypersurface equipped
with a degenerate affine metric of Kossowski type (cf.\ \cite{HHNSUY}).

This paper is organized as follows.
In Section~\ref{sec:pre}, we review basic notions of equiaffine immersions and
the total absolute curvature.
In Section~\ref{sec:proof}, we prove
Theorem~\ref{thm:main}.
Finally, in Section~\ref{sec:example}, we present an example of
a convex hypersurface and a degenerate affine metric example in affine space.

\section{Preliminaries}\label{sec:pre}

In this section, we recall the notion of equiaffine immersions of arbitrary codimension
and the associated Lipschitz--Killing curvature, following \cite{Koi1,Koi2}.
Some properties of equiaffine immersions of arbitrary codimension are also described
in Notes $1$ and $2$ of \cite{NS}.
Throughout this paper, we assume that every affine connection is torsion-free.
Let $f : M \to (\R^{n+r}, \tilde{\nabla}, \omega)$ be an affine immersion of an
$n$-dimensional manifold into the affine space $\R^{n+r}$ with an equiaffine
structure $(\tilde{\nabla}, \omega)$.  
Let $\pi:N \to M$ be a transversal bundle of $f$ and $\theta^{\perp}$ be a 
volume element on $N$.
Then an affine connection $\nabla$ on $M$ and 
symmetric $N$-valued $(0,2)$-tensor field $\alpha$ are defined by
\[
\widetilde{\nabla}_X df(Y) =  df(\nabla_X Y) + \alpha(X,Y)
\]
where $X,Y$ are tangent vector fields on $M$.
The term $df(\nabla_X Y)$ is the $df(TM)$-component
 and $\alpha(X,Y)$ is the $N$-component of $\widetilde{\nabla}_X df(Y)$.
We call $\nabla$ the \emph{induced connection} by $N$ 
and $\alpha$ the \emph{affine fundamental form} of $(f,N)$.  
A volume element $\theta$ on $M$ is defined by 
\[
    \theta (X_1, \cdots , X_n) =
    \frac{\omega (df(X_1), \cdots , df(X_n), \xi_1 , \cdots , \xi_r )}
          {\theta^{\perp} (\xi_1 , \cdots , \xi_r)} 
    \quad (X_1, \cdots ,X_n \in TM)
\]
where $(\xi_1 , \cdots , \xi_r)$ is a frame of $N$.
We call $\theta$ the \emph{induced volume element} by $(N,\theta^\perp)$.
Let $(\nabla, \theta)$ be an equiaffine structure of $M$, i.e. $\nabla \theta = 0$.
If the immersion $f:(M,\nabla, \theta) \to (\R^{n+r}, \tilde{\nabla}, \omega) $ 
admits a pair $(N,\theta^\perp)$ inducing $(\nabla, \theta)$,
then the immersion $f$ is called an \emph{equiaffine immersion}.
In the hypersurface case $r=1$, 
a symmetric $(0,2)$-tensor field $\alpha_\xi$ is defined by 
\begin{equation}\label{eq:affine-fund}
   \alpha_\xi(X,Y) \xi =  \widetilde{\nabla}_X df(Y) -  df(\nabla_X Y) \quad (X, Y \in TM)
\end{equation}
where $\xi$ is a transversal vector field such that $\theta^\perp (\xi) =1$.
This $\alpha_\xi$ is called the \emph{affine fundamental form} of $(f,\xi)$.

Since an affine space does not carry a canonical inner product,
we cannot define the height function by the inner product as in the Euclidean case.
Hence, we introduce the height function associated with an element of $\bigwedge^{n+r-1}\R^{n+r}$
For each $\phi \in \bigwedge^{n+r-1}\R^{n+r}$, write
$\phi = \vect{v}_1\wedge \cdots \wedge \vect{v}_{n+r-1}$ and take $\vect{v}_{n+r}\in \R^{n+r}$ such that
$\omega(\vect{v}_1, \cdots, \vect{v}_{n+r-1}, \vect{v}_{n+r})=1$.  
Then we define the \emph{height function} $\tilde{h}_{\phi}$ \emph{associated with} $\phi$ by
\begin{equation}\label{eq:height-function}
       {\rm pr}_{\vect{v}_{n+r}} (v) = \tilde{h}_{\phi} (v) \cdot \vect{v}_{n+r} \quad (v \in \R^{n+r}),   
\end{equation}
where ${\rm pr}_{\vect{v}_{n+r}}$ is the projection onto $\operatorname{span} \{\vect{v}_{n+r}\}$ 
with respect to the decomposition 
$\operatorname{span} \{ \vect{v}_1, \cdots, \vect{v}_{n+r-1} \} \oplus \operatorname{span} \{\vect{v}_{n+r}\} = \R^{n+r}$, 
and $\cdot$ denotes the scalar product.
It is easy to show that this definition is independent of the representation of $\phi$.
The function $\tilde{h}_{\phi}$ can be interpreted as 
the affine analogue of a linear height function determined 
by the hyperplane $\operatorname{span} \{ \vect{v}_1, \cdots, \vect{v}_{n+r-1} \}$.

Because the unit sphere cannot be defined in the affine space,
the role of the unit sphere is replaced by a unit ellipsoid.
For the volume element $\omega$ of $\R^{n+r}$,
we define a volume element $\omega'$ of $\wedge^{n+r-1}\R^{n+r}$ by
\[
\omega' (\widehat{\vect{v}_1} \wedge \cdots \wedge \vect{v}_{n+r}, \cdots , \vect{v}_1\wedge \cdots \wedge \widehat{\vect{v}_{n+r}}) = (-1)^{(n+r)(n+r-1)/2}
\]
for a basis $(\vect{v}_1, \cdots, \vect{v}_{n+r})$ of $\R^{n+r}$ satisfying $\omega (\vect{v}_1, \cdots , \vect{v}_{n+r}) = 1$.
Here $\widehat{\ }$ denotes omission. 
Take a basis $(\zeta_1, \cdots , \zeta_{n+r})$ of $\wedge^{n+r-1}\R^{n+r}$ satisfying
$
\omega'(\zeta_1, \cdots , \zeta_{n+r}) = 1,
$
and define a set $S$ by
\begin{equation}\label{eq:ellipsoid}
S := \left\{a_1 \zeta_1 + \cdots a_{n+r} \zeta_{n+r} \in \wedge^{n+r-1}\R^{n+r} \;\middle|\; \sum_{i=1}^{n+r} (a_i)^2 =1\right\}.
\end{equation}
Such a $S$ is called the \emph{unit ellipsoid}.
Since $S$ depends on the choice of basis $(\zeta_1, \cdots , \zeta_{n+r})$,
 we fix one $S$ throughout this paper.

From now on, we fix a pair $(N,\theta^\perp)$.
We define a map $\tilde{\nu}_N:\bigwedge^{r-1}N \to \bigwedge^{n+r-1}\R^{n+r}$ by
\[
    \tilde{\nu}_N(\eta) := df(e_1)\wedge\cdots\wedge df(e_n) \wedge \eta
\]
where $(\vect{e}_1,\cdots,\vect{e}_n)$ is a frame of $TM$ with $\theta(\vect{e}_1,\cdots,\vect{e}_n)=1$. 
The \emph{transversal ellipsoid bundle} $B$ of $f$ is defined by
\[
    B := \tilde{\nu}_N^{-1}(S)\subset \bigwedge^{r-1}N.
\]
The restriction $\nu:=\tilde{\nu}_N|_B : B\to S$ is called the \emph{Gauss map} of $(f,N,\theta^{\perp},S)$.

\begin{definition}[\cite{Koi1}]\label{def:LK-cur}
For $\eta\in B$, we define a function $G: B \to \R$ by
\[
    G(\eta):=(-1)^n \det_\theta(\tilde{h}_\eta\circ \alpha),
\]
where $\det_\theta(\tilde{h}_\eta\circ \alpha)$ is the determinant of a matrix $[(\tilde{h}_\eta\circ \alpha)(\vect{e}_i,\vect{e}_j)]_{1 \leq i,j \leq n}$ 
for a frame $(\vect{e}_1,\cdots,\vect{e}_n)$ of $T_{\pi(\eta)} M$ with $\theta (\vect{e}_1,\cdots,\vect{e}_n) =1$.
We call $G$ the \emph{Lipschitz--Killing curvature} of $(f,N,\theta^{\perp},S)$.
\end{definition}

Let $\omega_B$ and $\omega_S$ be a volume element on $B$ and $S$, respectively (see \cite{Koi1}).
Then the following relation between $\omega_B$ and $\omega_S$ holds \cite[Proposition 2.2]{Koi1}:
\begin{equation}\label{eq:pullback}
    (\nu^*\omega_S)_\eta = G(\eta) \cdot (\omega_B)_\eta \quad (\eta \in B)
\end{equation}
where $\nu^*$ denotes the pullback by $\nu$.

Koike defined the total absolute curvature of equiaffine immersions as follows.

\begin{definition}\label{def:TAC}
    Let $(M,\nabla,\theta)$ be an oriented compact $n$-dimensional manifold $M$ 
    with an equiaffine structure $(\nabla, \theta)$.
    The \emph{total absolute curvature} $\tau_S (f)$ 
    of an equiaffine immersion $f:(M,\nabla,\theta) \to (\R^{n+r}, \tilde{\nabla}, \omega)$ 
    with respect to $S$ is defined by 
    \[
    \tau_S (f) := \frac{1}{\operatorname{vol}(\mathcal{S}^{n+r-1}(1))} \int_B |G|\, \omega_B ,
    \]
    where $\operatorname{vol}(\mathcal{S}^{n+r-1}(1))$ is the volume of the unit sphere $\mathcal{S}^{n+r-1}(1)$.
\end{definition}

Let $C \subset B$ denote the set of critical points of the Gauss map $\nu : B \to S$.
By Sard's theorem, the image $\nu(C)$ has measure zero.
Moreover, a height function $h_{\phi} = \tilde{h}_{\phi} \circ f:M \to R$ of $f$
is \emph{not} a Morse function if and only if $\phi \in \nu(C)$. 
Lemma 3.1 in \cite{Koi1} yields a point $p\in M$ is a critical point of $h_\phi$ if and only if 
there uniquely exists $\eta \in B_p$ satisfying $\nu(\eta) = \phi$.
Hence, the number of critical points of the Morse height function $h_\phi$ coincide with the number of the preimage $\nu^{-1} (\phi)$.
Therefore, by \eqref{eq:pullback}, the total absolute curvature $\tau_S(f)$ is equal to 
the average number of critical points of height functions $h_{\phi} = \tilde{h}_{\phi} \circ f$, i.e.
\begin{equation}\label{eq:average}
    \tau_S (f) = \frac{1}{\operatorname{vol}(\mathcal{S}^{n+r-1}(1))} \int_{S \setminus \nu(C)} \# \operatorname{crit} (h_\phi) \, \omega_S,
\end{equation}
where $\# \operatorname{crit} (h_\phi)$ denotes the number of critical points of $h_\phi$ on $M$.
The total absolute curvature $\tau_S(f)$ does not depend on the choice of a pair $(N,\theta^{\perp})$ \cite[Theorem~A]{Koi2}.
On the other hand, $\tau_S(f)$ depends on the choice of a unit ellipsoid $S$.
However, the condition $\tau_S(f) = 2$ does not depend on the choice of $S$ as shown in the following fact.
\begin{fact}\cite[Theorem 4.1]{Koi2}\label{fa:min}
Let $M$ be an oriented compact $n$-dimensional manifold, 
and let $\gamma(M)$ denote the minimum number of critical points of Morse functions on $M$.
Then, for an equiaffine immersion $f:(M,\nabla,\theta) \to (\R^{n+r},\tilde{\nabla},\omega)$, 
the total absolute curvature $\tau_S(f)$ is equal to $\gamma(M)$ if and only if
$\# \operatorname{crit} (h_\phi ) = \gamma(M)$ holds for every Morse height function $h_\phi$.
In particular, $\tau_S (f) = 2$ if and only if $\# \operatorname{crit} (h_\phi ) = 2$ holds for every Morse height function $h_\phi$.
\end{fact}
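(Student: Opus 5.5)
The plan is to prove the Fact directly from the averaging formula \eqref{eq:average}. The first step is a normalization check. The unit ellipsoid $S$ is the image of the standard unit sphere under the linear map $\R^{n+r}\to\bigwedge^{n+r-1}\R^{n+r}$, $(a_i)\mapsto\sum a_i\zeta_i$. Since $\omega'(\zeta_1,\dots,\zeta_{n+r})=1$, I will take $\omega_S$ to be the induced volume element (as in \cite{Koi1}), so that $\int_S\omega_S=\operatorname{vol}(\mathcal{S}^{n+r-1}(1))$. By Sard's theorem $\nu(C)$ has measure zero, so \eqref{eq:average} exhibits $\tau_S(f)$ as the mean value of $\phi\mapsto\#\operatorname{crit}(h_\phi)$ over $S\setminus\nu(C)$ with respect to a probability measure.

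For every $\phi\in S\setminus\nu(C)$ the function $h_\phi$ is a Morse function on the compact manifold $M$. Hence $\#\operatorname{crit}(h_\phi)\ge\gamma(M)$, which gives $\tau_S(f)\ge\gamma(M)$. The direction "$\Leftarrow$" is then immediate: if every Morse height function has exactly $\gamma(M)$ critical points, the integrand is constantly $\gamma(M)$ and the average equals $\gamma(M)$.

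For "$\Rightarrow$", I argue by contradiction: suppose some Morse height function $h_{\phi_0}$ has $\#\operatorname{crit}(h_{\phi_0})>\gamma(M)$. Rescaling $\phi$ by a positive constant rescales $h_\phi$, so I may assume $\phi_0\in S$. The key step is that $\phi\mapsto\#\operatorname{crit}(h_\phi)=\#\nu^{-1}(\phi)$ is locally constant on the open set $S\setminus\nu(C)$. This set is open because $B$ is compact (as $M$ is compact and the fibres are ellipsoids), so $C$ is closed and compact and $\nu(C)$ is closed. On $B\setminus\nu^{-1}(\nu(C))$ the map $\nu$ is a proper local diffeomorphism, hence a finite covering, and its number of sheets is locally constant. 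Equivalently, nondegenerate critical points persist and move smoothly under perturbation of $\phi$, and no new ones appear by compactness. Thus $\#\operatorname{crit}(h_\phi)\ge\gamma(M)+1$ on a neighbourhood $U$ of $\phi_0$ of positive $\omega_S$-measure. Integrating then gives $\tau_S(f)\ge\gamma(M)+\omega_S(U)/\operatorname{vol}(\mathcal{S}^{n+r-1}(1))>\gamma(M)$, which is the desired contradiction.

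For the final sentence, note that $\gamma(M)\ge2$ since a Morse function on a compact manifold attains a maximum and a minimum. If $\tau_S(f)=2$, then $2=\tau_S(f)\ge\gamma(M)\ge2$ forces $\gamma(M)=2$, and the main statement applies. Conversely, if every Morse height function has exactly $2$ critical points, the average in \eqref{eq:average} equals $2$. The only delicate point is the local constancy and properness argument, including the identification $\#\operatorname{crit}(h_\phi)=\#\nu^{-1}(\phi)$, which rests on Lemma 3.1 of \cite{Koi1}; everything else is measure-theoretic bookkeeping.
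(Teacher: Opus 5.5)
Your proof is correct. The paper gives no proof of this Fact; it is quoted from Koike. Your argument uses exactly the mechanism the paper itself relies on: you get $\tau_S(f)\ge\gamma(M)$ from the averaging formula \eqref{eq:average}, and then find a positive-measure neighbourhood of a Morse value $\phi_0$ on which the critical-point count stays $\ge\gamma(M)+1$. The paper runs the same argument in the proof of Lemma~\ref{lem:reduction}, using the inverse function theorem at each preimage (that is, lower semicontinuity of the count, which is all you need) where you use the stronger finite-covering argument.
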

From this Fact~\ref{fa:min}, it follows that the condition $\tau_S(f)=2$ does not depend on the choice of the unit ellipsoid $S$.
Therefore, the condition $\tau_S(f)=2$ is a geometric property of the equiaffine immersion $f$ itself, 
and it is natural to expect that this condition characterizes a geometric property of the image $f(M)$.

\begin{remark}
For an equiaffine immersion $f:(M,\nabla,\theta) \to (\R^{n+r},\tilde{\nabla},\omega)$,
if the total absolute curvature $\tau_S(f)$ is equal to $2$ with respect to some unit ellipsoid $S$,
then $\tau_S(f)$ is equal to $2$ with respect to any unit ellipsoid $S$.
\end{remark}

There are many examples of equiaffine immersions with minimal total absolute curvature $2$.
For example, ellipsoids have minimal total absolute curvature $2$ (see Example \ref{ex:ellipsoid}).
More generally,
centro-equiaffine immersions which are defined on the sphere 
also have minimal total absolute curvature (see \cite[Example 1,2]{Koi2}).

\section{Proof of Theorem~\ref{thm:main}}\label{sec:proof}

In this section, we prove Theorem~\ref{thm:main}. 
In Subsection~\ref{subsec:red}, 
we prove Theorem~\ref{thm:codim-one}, 
showing that if the total absolute curvature is $2$, 
then the image is contained in an $(n+1)$-dimensional affine subspace of $\R^{n+r}$.
In other words, the image of an equiaffine immersion with minimal total absolute curvature 
is a hypersurface in an $(n+1)$-dimensional affine space.
By Theorem~\ref{thm:codim-one}, 
to prove the equivalence between 
the minimality of the total absolute curvature and convexity, 
it suffices to consider only the case where the codimension is $1$. 
This will be done in Subsection~\ref{subsec:cvx}. 
We prove that the total absolute curvature is equal to $2$ 
if and only if the image of the equiaffine immersion 
is a convex hypersurface (Theorems~\ref{thm:cvx_to_min} and \ref{thm:min_to_cvx}). 
Finally, combining these results, we complete the proof of Theorem~\ref{thm:main}.

\subsection{Reduction of codimension}\label{subsec:red}

In this subsection,
we prove Theorem~\ref{thm:codim-one} that 
the image of an equiaffine immersion with minimal total absolute curvature 
$2$ is contained in an $(n+1)$-dimensional affine subspace.
To show Theorem~\ref{thm:codim-one}, we first show Lemma~\ref{lem:reduction} that 
the codimension can be reduced by one. 
However, a decrease in the codimension means that the ambient space and its structure change.
Hence, there is a possibility that the equiaffine property may be lost or that the value of the total absolute curvature may change. 
In Proposition~\ref{prop:equiaffine_restrict}, we show that by introducing an appropriate equiaffine structure, 
the total absolute curvature does not change even when 
the codimension is reduced. Finally, using these results, 
we prove Theorem~\ref{thm:codim-one}.

We fix a unit ellipsoid $S$.
Let $f : (M,\nabla,\theta) \to (\R^{n+r},\tilde{\nabla},\omega)$ be an equiaffine immersion, 
and let $(N,\theta^{\perp})$ be a pair of a transversal bundle and a volume element of $f$.
We denote the transversal ellipsoid bundle by $\pi:B \to M$.
Next, we explain the linear correspondence between elements of 
$\wedge^{n+r-1} \R^{n+r}$ and height functions, 
which will be used in the proofs of Lemma~\ref{lem:hyperplane} and Proposition~\ref{prop:equiaffine_restrict}.
For $\phi \in \wedge^{n+r-1} \R^{n+r}$, we write
$
\phi = \vect{v}_1 \wedge \cdots \wedge \vect{v}_{n+r-1}.
$
By the definition of the height function, 
$\tilde{h}_\phi$ can be written as
\[
\tilde{h}_\phi(\vect{v})
=
\omega(\vect{v}_1, \ldots, \vect{v}_{n+r-1}, \vect{v})
\qquad
(\vect{v} \in \R^{n+r}).
\]
Since the height function is independent of the representation of $\phi$, 
we write
\[
\tilde{h}_\phi(\vect{v}) = \omega(\phi, \vect{v}).
\]
Furthermore, for $\phi_1, \phi_2 \in \wedge^{n+r-1} \R^{n+r}$ and $a,b \in \R$, 
we have
\[
\tilde{h}_{a\phi_1 + b\phi_2}(\vect{v})
=
\omega(a\phi_1 + b\phi_2, \vect{v})
=
a\,\omega(\phi_1, \vect{v})
+
b\,\omega(\phi_2, \vect{v})
=
a\,\tilde{h}_{\phi_1}(\vect{v})
+
b\,\tilde{h}_{\phi_2}(\vect{v}).
\]
Hence,
\begin{equation}\label{eq:linear}
\tilde{h}_{a\phi_1 + b\phi_2}
=
a\,\tilde{h}_{\phi_1}
+
b\,\tilde{h}_{\phi_2}.
\end{equation}

The following lemma is used in the proof of Lemma~\ref{lem:reduction}.

\begin{lemma}\label{lem:hyperplane}
For a point $p \in M$, we fix $\eta \in B_p$.
Write  $\eta = \xi_1 \wedge \cdots \wedge \xi_{r-1}$ and take $\xi_r \in \R^{n+r}$
such that $\theta^\perp(\xi_1,\cdots,\xi_{r-1}, \xi_r)$ = 1.
For $t \in [0,2\pi]$, we define
\[
   \xi (t) := \varphi(t) \, ( \cos t\,\xi_{r-1} + \sin t\,\xi_r )
\]
and
\[
\eta_t := 
\xi_1 \wedge \cdots \wedge \xi_{r-2} \wedge \xi (t),
\]
where $\varphi(t)$ is a positive function such that $\eta_t \in B_p$.
We also define the $(n+r-1)$-dimensional subspace $\Pi_t$ by  
$\Pi_t = \operatorname{span} \{df_p (\vect{e}_1), \cdots, df_p(\vect{e}_n), \xi_1, \cdots, \xi_{r-2}, \xi (t)\}$
where $(\vect{e}_1, \cdots, \vect{e}_n)$ is a frame of $TM$.

Then, for each $\vect{v} \in \R^{n+r}$, there exists $t \in [0,2 \pi]$ such that $\vect{v} \in \Pi_t$.
\end{lemma}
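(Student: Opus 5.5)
We reduce the statement to linear algebra in a two-dimensional quotient. Set
\[
W := \operatorname{span}\{df_p(\vect{e}_1), \cdots, df_p(\vect{e}_n), \xi_1, \cdots, \xi_{r-2}\},
\]
an $(n+r-2)$-dimensional subspace of $\R^{n+r}$. Since $(df_p(\vect{e}_1),\dots,df_p(\vect{e}_n),\xi_1,\dots,\xi_r)$ is a basis of $\R^{n+r}$ (the vectors $\xi_i$ frame the transversal space $N_p$, as $\theta^\perp(\xi_1,\dots,\xi_r)=1\neq 0$), we have the direct sum decomposition
\[
\R^{n+r} = W \oplus \operatorname{span}\{\xi_{r-1}, \xi_r\}.
\]
Because $\varphi(t)>0$, scaling does not change spans, so
\[
\Pi_t = W \oplus \operatorname{span}\{\cos t\,\xi_{r-1} + \sin t\,\xi_r\}.
\]
This also shows that $\Pi_t$ really is $(n+r-1)$-dimensional.

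Now take an arbitrary $\vect{v} \in \R^{n+r}$ and decompose it uniquely as $\vect{v} = \vect{w} + a\,\xi_{r-1} + b\,\xi_r$ with $\vect{w}\in W$ and $a,b\in\R$. There are two cases.
\begin{itemize}
\item If $(a,b)=(0,0)$, then $\vect{v}\in W\subset \Pi_t$ for every $t$, and any $t$ works.
\item Otherwise, write $(a,b) = \rho(\cos t_0, \sin t_0)$ with $\rho = \sqrt{a^2+b^2} > 0$ and some $t_0 \in [0,2\pi]$ (polar coordinates). Then
\[
\vect{v} = \vect{w} + \rho\,(\cos t_0\,\xi_{r-1} + \sin t_0\,\xi_r) = \vect{w} + \frac{\rho}{\varphi(t_0)}\,\xi(t_0) \in \Pi_{t_0}.
\]
\end{itemize}
In either case $\vect{v} \in \Pi_t$ for some $t$, which proves the claim.

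The only point needing care is the existence and positivity of $\varphi(t)$, which the statement assumes. It holds because $\tilde{\nu}_N$ is linear and injective on each fiber $\bigwedge^{r-1}N_p$: the vector $\xi_1\wedge\cdots\wedge\xi_{r-2}\wedge(\cos t\,\xi_{r-1}+\sin t\,\xi_r)$ is nonzero, so its image under $\tilde{\nu}_N$ is a nonzero element of $\bigwedge^{n+r-1}\R^{n+r}$. Hence a unique positive rescaling of it lies on the unit ellipsoid $S$. The case $r=1$, where no $\xi_{r-1}$ exists, is implicitly excluded by the statement, so we need $r\ge 2$. There is no real obstacle; the argument is just the observation that $\Pi_t$ sweeps out all hyperplanes containing the fixed codimension-two subspace $W$.
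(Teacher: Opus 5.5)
Your proof is correct, but it takes a different (essentially dual) route from the paper's. The paper never decomposes $\vect{v}$. Instead it uses \eqref{eq:linear} to write $\widetilde{h}_{\nu(\eta_t)} = \varphi(t)\bigl(\cos t\,\widetilde{h}_{\nu(\eta_0)} + \varphi(\pi/2)^{-1}\sin t\,\widetilde{h}_{\nu(\eta_{\pi/2})}\bigr)$. It then notes that $t\mapsto \widetilde{h}_{\nu(\eta_t)}(\vect{v})$ changes sign between $t=0$ and $t=\pi$ unless it vanishes identically, and applies the intermediate value theorem to get a zero. Implicitly it uses that $\Pi_t$ is exactly the kernel of the linear functional $\widetilde{h}_{\nu(\eta_t)}$.

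You work directly in $\R^{n+r}$. The splitting $\R^{n+r} = W \oplus \operatorname{span}\{\xi_{r-1},\xi_r\}$, together with the polar angle of the $(\xi_{r-1},\xi_r)$-components of $\vect{v}$, produces the required $t$ explicitly.

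\textbf{What your route buys.} It is more elementary and self-contained. It needs neither continuity of $\varphi$, nor the intermediate value theorem, nor the identification $\Pi_t=\ker\widetilde{h}_{\nu(\eta_t)}$. It also actually verifies that $\Pi_t$ is $(n+r-1)$-dimensional and that a positive $\varphi$ exists, both of which the paper takes for granted. Finally, it makes transparent that $\{\Pi_t\}$ is the whole pencil of hyperplanes containing the codimension-two subspace $W$.

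\textbf{What the paper's route buys.} Its version is phrased through the height functions $\widetilde{h}_{\nu(\eta_t)}$ and their linear, continuous dependence on $t$. That is exactly the language of the next proof, where $t_3$ is chosen so that \eqref{eq:ineq_three} holds. So the paper's argument doubles as setup for Lemma~\ref{lem:reduction}, at the cost of being less direct for the lemma itself.
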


\begin{proof}
When $t=0$ or $t = \pi/2$, we obtain
\[
\eta_0 = \xi_1 \wedge \cdots \wedge \xi_{r-2} \wedge \xi (0) = \xi_1 \wedge \cdots \wedge \xi_{r-2} \wedge \xi_{r-1}
\]
and
\[
\eta_{\pi/2} = \xi_1 \wedge \cdots \wedge \xi_{r-2} \wedge \xi (\pi/2) = \varphi(\pi/2)\, \xi_1 \wedge \cdots \wedge \xi_{r-2} \wedge \xi_{r}
\]
respectively.
Therefore, we have
\begin{align*}
\eta_t &= \xi_1 \wedge \cdots \wedge \xi_{r-2} \wedge \xi (t) \\
&= \varphi (t) (\xi_1 \wedge \cdots \wedge \xi_{r-2} \wedge \cos t\,\xi_{r-1} + \xi_1 \wedge \cdots \wedge \xi_{r-2} \wedge \sin t\,\xi_{r}) \\
&= \varphi (t) \left(\cos t\, \eta_0 + \frac{1}{\varphi(\pi/2)} \sin t\, \eta_{\pi/2}\right)
\end{align*}
for every $t \in [0,2\pi]$.
Thus the height function with respect to $\nu(\eta_t)$ can be written as
\[
   \widetilde{h}_{\nu(\eta_t)}
       = \varphi(t)
         \left(\cos t\,\widetilde{h}_{\nu(\eta_0)}
               + \frac{1}{\varphi(\pi/2)} \sin t\,
               \widetilde{h}_{\nu(\eta_{\pi/2})} \right).
\]
Here we use \eqref{eq:linear}.
Since
\[
\widetilde{h}_{\nu(\eta_\pi)} (\vect{v}) = - \varphi(\pi) \widetilde{h}_{\nu(\eta_0)} (\vect{v})
\]
holds when $t = \pi$,
if $\widetilde{h}_{\nu(\eta_0)} (\vect{v})$ does not vanish for a vector $\vect{v} \in \R^{n+r}$,
then $\widetilde{h}_{\nu(\eta_0)} (\vect{v})$ and $\widetilde{h}_{\nu(\eta_\pi)} (\vect{v})$ have 
opposite signs.
Therefore, by the intermediate value theorem, 
there exists $t \in (0,\pi)$ such that 
$\widetilde{h}_{\nu(\eta_t)}(\vect{v})=0$.
The same statement holds in the case of $\widetilde{h}_{\nu(\eta_{\pi/2})} (\vect{v}) \neq 0$.
If both $\widetilde{h}_{\nu(\eta_0)} (\vect{v})$ and $ \widetilde{h}_{\nu(\eta_{\pi/2})} (\vect{v})$ vanish,
then $\widetilde{h}_{\nu(\eta_t)}(\vect{v})=0$ for every $t \in [0,2\pi]$.
As a result, for each $\vect{v} \in \R^{n+r}$, there exists $t \in [0,\pi]$ such that 
$\widetilde{h}_{\nu(\eta_t)}(\vect{v})=0$.
This implies that every $\vect{v} \in \R^{n+r}$ belongs to some $\Pi_t$.
\end{proof}

In the following Lemma~\ref{lem:reduction}, 
we show that the image of an equiaffine immersion in $\R^{n+r}$ 
whose total absolute curvature is equal to $2$ 
is contained in an $(n+r-1)$-dimensional affine subspace.

\begin{lemma}\label{lem:reduction}
Suppose that $r \geq 2$.
Let $M$ be an oriented compact $n$-dimensional manifold and 
let $f : (M,\nabla,\theta) \to (\R^{n+r}, \tilde{\nabla}, \omega)$ be an equiaffine immersion.
If the total absolute curvature of $f$ is equal to $2$,
 then the image $f(M)$ is contained 
in an $(n+r-1)$-dimensional affine subspace of $\R^{n+r}$.
\end{lemma}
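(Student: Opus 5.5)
Fix a point $p\in M$ at which some height function attains its maximum, and aim to show that $f(M)$ lies in $f(p)+\Pi_t$ for a suitable $t$. Fact~\ref{fa:min} gives the key input: since $\tau_S(f)=2$, every Morse height function $h_\phi$ has exactly two critical points, its maximum and its minimum.

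\textbf{Step 1 (a tight-type property).} Since $\nu(C)$ has measure zero, Morse height functions are dense in $S$. Using this density, first show that for every $\phi\in S$ and every critical point $q$ of $h_\phi$, the point $q$ is a global maximum or a global minimum of $h_\phi$. Suppose instead that some $q$ is a critical point that is neither. Perturb $\phi$ slightly to a Morse $\phi'$. The plan is to show that $h_{\phi'}$ then acquires a third critical point near $q$, either a local non-global extremum or a saddle. For a degenerate critical point this is a standard, if delicate, stability argument, and it contradicts Fact~\ref{fa:min}.

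\textbf{Step 2 (the pencil $\eta_t$).} Take any $\eta\in B_p$ and form the circle $\eta_t\in B_p$ of Lemma~\ref{lem:hyperplane}. Because $\nu(\eta_t)$ annihilates $df_p(T_pM)$, the point $p$ is a critical point of every $h_{\nu(\eta_t)}$. By Step 1, $p$ is therefore a global maximum or global minimum of $h_{\nu(\eta_t)}$ for each $t$. Moreover $h_{\nu(\eta_{t+\pi})}=-c\,h_{\nu(\eta_t)}$ with $c>0$, so the two alternatives swap as $t$ increases by $\pi$.

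\textbf{Step 3 (finding a constant height function).} Consider the sets
\[
T_+=\{t: h_{\nu(\eta_t)}\le h_{\nu(\eta_t)}(p)\ \text{on } M\},\qquad T_-=\{t: h_{\nu(\eta_t)}\ge h_{\nu(\eta_t)}(p)\ \text{on } M\}.
\]
Both are closed, by continuity in $t$, and they cover the circle. Both are nonempty, since $t\in T_+$ if and only if $t+\pi\in T_-$. By connectedness of the circle they intersect at some $t_0$, where $h_{\nu(\eta_{t_0})}$ is constant on $M$. Hence $f(M)-f(p)\subset \ker \tilde h_{\nu(\eta_{t_0})}=\Pi_{t_0}$, which is an $(n+r-1)$-dimensional subspace. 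This gives the lemma.

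Lemma~\ref{lem:hyperplane} fits here as a consistency check: every direction $f(q)-f(p)$ lies in some $\Pi_t$. It could also serve as an alternative route, showing that the $t$ chosen for each $q$ can be taken uniform.

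The main obstacle is Step 1, passing from ``Morse height functions have two critical points'' to ``every critical point of every height function is a global extremum.'' If the perturbation argument at degenerate critical points proves awkward, I would try an alternative. One can exploit that $t\mapsto h_{\nu(\eta_t)}$ is a pencil, and work only with Morse $\phi$ near $\nu(\eta_t)$, whose maximum and minimum points converge to global extrema of the limit. It may then be enough to establish Step 3 with $T_\pm$ defined through such limits rather than through Step 1 directly.
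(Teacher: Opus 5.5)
Your Steps 2 and 3 are fine. Once you know that $p$ is a global maximum or minimum of $h_{\nu(\eta_t)}$ for \emph{every} $t$, the closed sets $T_\pm$ cover the circle and are exchanged by $t\mapsto t+\pi$. They therefore meet at some $t_0$ where $h_{\nu(\eta_{t_0})}$ is constant, giving $f(M)\subset f(p)+\Pi_{t_0}$.

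The gap is Step 1, and it is not merely delicate: the local mechanism you describe is false. A non-extremal degenerate critical point can leave no trace at all on nearby Morse height functions. Suppose that near $q$ a surface has the form $(x,y)\mapsto (x,y,x^3,0,\dots,0)$. Then $q$ is a critical point of the third coordinate that is not even a local extremum. Yet every height function restricts there to $ax^3+bx+cy+\mathrm{const}$, whose critical points, if any, form whole segments $\{x=x_0\}$ (they require $c=0$ and $3ax^2+b=0$). So no Morse height function has any critical point near $q$. The extra critical points forced by non-convexity lie elsewhere, and no argument localized at $q$ sees them. Your fallback has the same defect: limits of extremum points of Morse $h_{\phi_k}$ with $\phi_k\to\nu(\eta_t)$ are global extrema of $h_{\nu(\eta_t)}$, but nothing forces $p$ to be such a limit. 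Step 1 is true a posteriori, since it holds for convex hypersurfaces, but proving it directly amounts to the whole theorem.

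The missing idea, which is what the paper's proof turns on, is to choose the base point so that only \emph{nondegenerate} critical points ever have to be perturbed.
\begin{itemize}
\item Since $\tau_S(f)=2$ forces $G\not\equiv 0$, take $\eta\in B_p$ with $G(\eta)\neq 0$. For example, take $p$ to be the maximum point of a \emph{Morse} height function, not an arbitrary one.
\item By \eqref{eq:linear}, $G(\eta_t)=(-1)^n\varphi(t)^n\det\bigl(\cos t\,A_0+\varphi(\pi/2)^{-1}\sin t\,A_1\bigr)$ for fixed symmetric matrices $A_0,A_1$ with $\det A_0\neq 0$. This is a nonzero homogeneous polynomial of degree $n$ in $(\cos t,\sin t)$, so $G(\eta_t)\neq 0$ for all but finitely many $t$. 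Continuity alone gives this only near $t=0$.
\item For such $t$, $\nu$ is a diffeomorphism from a neighbourhood $W$ of $\eta_t$ onto an open set. Regular values $\phi_k\to\nu(\eta_t)$ in $\nu(W)$ then yield critical points $\pi\bigl((\nu|_W)^{-1}(\phi_k)\bigr)\to p$ of Morse height functions. By Fact~\ref{fa:min} these are global extrema, hence so is $p$ for $h_{\nu(\eta_t)}$.
\item Closedness of $T_+\cup T_-$ extends this to all $t$, and your Step 3 finishes.
\end{itemize}

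The paper runs the same idea by contradiction. Assuming $f(M)$ lies in no hyperplane, it uses Lemma~\ref{lem:hyperplane} to find $p_1,p_2$ and an open interval of $t$ on which $p$ is a non-extremal critical point of $h_{\nu(\eta_t)}$. It picks $t_3$ there with $G(\eta_{t_3})\neq 0$ and uses the inverse function theorem to produce a positive-measure set of Morse height functions with three critical points. With this repair, your connectedness argument is a tidy substitute for the paper's explicit choice of $p_1,p_2,t_1,t_2,t_3$.
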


\begin{proof}
In this proof, we use $\eta_t$ and $\Pi_t$ defined in Lemma~\ref{lem:hyperplane}.
We assume that 
$f(M)$ does not belong to any $(n+r-1)$-dimensional affine subspace of $\R^{n+r}$, 
leading to a contradiction.

Since $\tau_S(f)=2$, the Lipschitz--Killing curvature $G$
does not vanish everywhere on $B$.  
Hence there exists $\eta\in B$ such that $G(\eta)\neq 0$.
Let $p=\pi(\eta)$, write 
$\eta = \xi_1\wedge\cdots\wedge\xi_{r-1}$ and
take $\xi_r$ such that $\theta^\perp(\xi_1,\cdots,\xi_{r-1}, \xi_r)=1$.
By a translation if necessary, 
we may assume that the point $f(p)$ is at the origin.
We set a function $g : [0,2\pi] \to \R$ by
$
  g(t) := G(\eta_t).
$
Since $g$ is continuous and $g(0)\neq 0$, it follows that
$g(t)$ does not vanish everywhere on $[0,2\pi]$.

To derive a contradiction, we construct height functions having at least three critical points.
Because $f(M)$ is not contained in any affine subspace of dimension $(n+r-1)$,
there exists $p_1\in M$ such that
$
   f(p_1)\notin \cap_{t \in [0,2 \pi]} \Pi_t.
$
By Lemma \ref{lem:hyperplane}, there exists $t_1$ such that $f(p_1)\in \Pi_{t_1}$.
Since $f(M)$ is not contained in $\Pi_{t_1}$, we may choose 
$p_2\in M$ such that $f(p_2)\notin \Pi_{t_1}$.
Again using Lemma \ref{lem:hyperplane}, choose $t_2$ such that 
$f(p_2)\in \Pi_{t_2}$.
We now choose $t_3$ satisfying  
\begin{equation}\label{eq:ineq_three}
   \widetilde{h}_{\nu(\eta_{t_3})}(p_2)
   < \widetilde{h}_{\nu(\eta_{t_3})}(p)
   < \widetilde{h}_{\nu(\eta_{t_3})}(p_1),
\end{equation}
and $g(t_3)\neq 0$.
Such $t_3$ can be chosen by the continuity of $g(t)$ and 
the fact that the height functions vary continuously with respect to $t$.
Since $\eta_{t_3}$ belongs to the fiber $B_p$, 
the point $p$ is a critical point of the height function $h_{\nu(\eta_{t_3})}$ (see \cite[Lemma 3.1]{Koi1}).
Therefore, the height function $h_{\nu(\eta_{t_3})}$ has at least three 
critical points on $M$: a minimum, a maximum, and the critical point at $p$.
By the inverse function theorem, there exists an open neighborhood $W$ of $\eta_{t_3}$,  
and the Gauss map $\nu |_W$ is a diffeomorphism onto its image.
We can choose $W$ so small that \eqref{eq:ineq_three} still holds for each $\eta' \in W$.
Then, the height function $h_{\nu(\eta')}$ has at least three critical points for each $\eta' \in W$. 
Since $\nu(W)$ has positive measure in $S$,
the total absolute curvature $\tau_S(f)$ is greater than $2$ (see \eqref{eq:average}).
This contradicts $\tau_S(f) = 2$.
Therefore $f(M)$ is contained in an $(n+r-1)$-dimensional affine subspace.  
\end{proof}

Next, we show in Proposition~\ref{prop:equiaffine_restrict} that if an equiaffine immersion $f$ 
is contained in an $(n+r-1)$-dimensional affine subspace $L$, 
then one can choose an equiaffine structure of $L$ so that the total absolute curvature 
does not change even when $f$ is regarded as a map into $L$.
The following volume element $\omega_L$ on $L$ 
and the relation \eqref{eq:psi_xi} 
between the height functions $\tilde{h}^L_\psi$ and $\tilde{h}_{\psi \wedge \xi}$ 
are used in the proof of Proposition~\ref{prop:equiaffine_restrict}.
Let $L'$ be the $(n+r-1)$-dimensional linear subspace parallel to $L$.
We take a vector $\xi \in \R^{n+r}\setminus L'$ and
 define a volume element $\omega_L$ on $L'$ by
\begin{equation}\label{eq:omega_L}
   \omega_{L}(X_1, \cdots , X_{n+r-1})
   := \omega( X_1, \cdots , X_{n+r-1}, \xi)
   \quad (X_1, \cdots , X_{n+r-1} \in L').
\end{equation}
Next, we describe the properties of the height function $\tilde{h}^L_\psi$ on $L'$.
For $\psi \in \wedge^{n+r-2} L'$, the height function $\tilde{h}^L_\psi$ on $L'$ is defined by using $\omega_L$ (see \eqref{eq:height-function}).
It can be written as
$\tilde{h}^L_\psi (\vect{v}) = \omega_L (\psi, \vect{v})$.
On the other hand, for $\psi \wedge \xi \in \wedge^{n+r-1} \R^{n+r}$, 
the height function $\tilde{h}_{\psi \wedge \xi}$ can be written as
$\tilde{h}_{\psi \wedge \xi} (\vect{v}) = \omega(\psi \wedge \xi, \vect{v})$.
Hence, if $\psi = \vect{w}_1 \wedge \cdots \wedge \vect{w}_{n+r-2}$, then
\[
  \omega(\psi \wedge \xi, \vect{v}) = \omega(\vect{w}_1, \cdots , \vect{w}_{n+r-2}, \xi, \vect{v})
= - \omega(\vect{w}_1, \cdots , \vect{w}_{n+r-2}, \vect{v}, \xi)
= -\omega_L (\psi, \vect{v})
\]
holds.
Therefore, we have
\begin{equation}\label{eq:psi_xi}
\tilde{h}^L_\psi = - \tilde{h}_{\psi \wedge \xi}.
\end{equation}

This is the key step in the proof, 
since unlike the Euclidean case an affine subspace does not naturally inherit an equiaffine structure.
Thus, even if $f(M) \subset L$, it is not automatic that $f$ which is regarded as a map into $L$, 
remains an equiaffine immersion with the same total absolute curvature. 
The following proposition resolves this issue.

\begin{proposition}\label{prop:equiaffine_restrict}
For an equiaffine immersion
$
f:(M,\nabla,\theta)\to(\R^{n+r},\tilde{\nabla},\omega),
$
suppose that the total absolute curvature $\tau_S (f)$ is $2$ 
and the image $f(M)$ is contained in an $(n+r-1)$-dimensional affine subspace $L$.
Denote by $L'$ the $(n+r-1)$-dimensional linear subspace parallel to $L$.
For the volume element $\omega_L$ on $L'$ defined by \eqref{eq:omega_L},
the following statements hold:
\begin{itemize}
\item[$(1)$] 
The restriction of $f$ to $L$,
\[
f_L:(M,\nabla,\theta)\to (L,\tilde{\nabla}|_L,\omega_L).
\]
is an equiaffine immersion.

\item[$(2)$]
For an arbitrary unit ellipsoid $S_L$ in $\wedge^{n+r-2}L'$,
the total absolute curvature $\tau_{S_L}(f_L)$ of $f_L$ with respect to $S_L$ is equal to $2$.
\end{itemize}

\end{proposition}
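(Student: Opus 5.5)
Part (1) is an explicit construction. Since $f(M)\subset L$, every $df(X)$ lies in $L'$. The vector $\xi\in\R^{n+r}\setminus L'$ fixed in \eqref{eq:omega_L} can serve as part of the transversal data. Let $N$ be the given transversal bundle, and choose a transversal bundle $N_L\subset L'$ of $f_L$ of rank $r-1$ together with a volume element $\theta^\perp_L$ on it. We want the pair $(N_L\oplus\operatorname{span}\{\xi\},\,\theta^\perp_L\wedge\xi^*)$ to induce $(\nabla,\theta)$ in $\R^{n+r}$. By \cite[Theorem~A]{Koi2}, $\tau_S(f)$ does not depend on $(N,\theta^\perp)$, so we may change $N$ freely as long as it still induces $(\nabla,\theta)$.

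The concrete procedure is as follows. Project $N$ along $\xi$ onto $L'$, i.e.\ use the splitting $\R^{n+r}=L'\oplus\operatorname{span}\{\xi\}$. Because $df(TM)\subset L'$ and $N\oplus df(TM)=\R^{n+r}$, the projected bundle $N_L:={\rm pr}_{L'}(N)$ has rank $r-1$ and is transversal to $df(TM)$ inside $L'$; in particular $\xi$-directions of $N$ drop out, and we may replace $N$ by $N_L\oplus\operatorname{span}\{\xi\}$. The new pair must still induce $\nabla$. Since $\tilde\nabla_X df(Y)\in L'$ (because $f$ maps into $L$), its decomposition along $df(TM)\oplus N_L\oplus\operatorname{span}\{\xi\}$ has no $\xi$-component. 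So the tangential part computed in $L$ with respect to $N_L$ agrees with the tangential part computed in $\R^{n+r}$ with respect to $N_L\oplus\operatorname{span}\{\xi\}$. This needs a check: the tangential part with respect to the new bundle differs from the old $\nabla$ only through the change of complement, and I expect it to coincide because $\nabla$ is determined by projecting an $L'$-valued vector along a complement that contains $N_L$. If instead one insists on keeping the original $N$, I would replace $N$ by ${\rm pr}_{L'}(N)\oplus\xi$ and verify directly that the induced connection is unchanged. For the volume element, set $\theta^\perp_L(\zeta_1,\dots,\zeta_{r-1}):=\theta^\perp(\zeta_1,\dots,\zeta_{r-1},\xi)$. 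Then the definition of $\omega_L$ gives exactly $\theta$ as the induced volume element, so $f_L$ is equiaffine.

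Part (2) goes through Fact~\ref{fa:min}. It suffices to show that every Morse height function $h^L_\psi=\tilde h^L_\psi\circ f_L$ with $\psi\in\wedge^{n+r-2}L'$ has exactly $2$ critical points; since $M$ is a sphere by Fact~\ref{fa:Koi}(2), $\gamma(M)=2$. By \eqref{eq:psi_xi}, $h^L_\psi=-h_{\psi\wedge\xi}$, so it is a height function of $f$ with the same critical points. The expected main obstacle is that $\phi=\psi\wedge\xi$ might not be a regular value of $\nu$, so Fact~\ref{fa:min} for $f$ cannot be applied to it directly. I would handle this by perturbation. A small perturbation $\phi'$ of $\phi$ in $\wedge^{n+r-1}\R^{n+r}$ gives a Morse height function $h_{\phi'}$, because non-Morse directions form a measure-zero set. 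Since $h_{\psi\wedge\xi}$ is Morse on compact $M$, small perturbations of it have the same number of critical points. Each such $h_{\phi'}$ has exactly $2$ critical points by Fact~\ref{fa:min} applied to $f$, so $h^L_\psi$ has exactly $2$.

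Finally, one must make sure that Fact~\ref{fa:min} is legitimately applicable to $f_L$. This needs $f_L$ to be an equiaffine immersion, which is part (1), and that $M$ is oriented and compact, which holds by hypothesis. The average formula \eqref{eq:average} for $f_L$ then gives $\tau_{S_L}(f_L)=2$ for every unit ellipsoid $S_L$.
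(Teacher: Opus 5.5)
\textbf{Part (1) has a genuine gap.} Projecting $N$ along $\xi$ onto $L'$ does not give a rank-$(r-1)$ bundle. The kernel of $\mathrm{pr}_{L'}|_{N_p}$ is $N_p\cap\operatorname{span}\{\xi\}$, which is zero unless the fixed vector $\xi$ happens to lie in $N_p$. So in general $\mathrm{pr}_{L'}(N_p)$ is $r$-dimensional. Since $\mathrm{pr}_{L'}(N_p)+df(T_pM)=\mathrm{pr}_{L'}(\R^{n+r})=L'$, it meets $df(T_pM)$ in a line, so it is not transversal to $f_L$. For example, take $L'=\operatorname{span}\{e_1,e_2\}\subset\R^3$, $df(T_pM)=\operatorname{span}\{e_1\}$, $N_p=\operatorname{span}\{e_2,e_1+e_3\}$ and $\xi=e_3$; then $\mathrm{pr}_{L'}(N_p)=L'$.

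For the same reason, $\theta^\perp(\zeta_1,\dots,\zeta_{r-1},\xi)$ is undefined for $\zeta_i\in\mathrm{pr}_{L'}(N)$, since these vectors need not lie in $N$. Koike's Theorem~A does not rescue this: it says $\tau_S$ is independent of the pair, not that some pair containing $\xi$ and inducing $(\nabla,\theta)$ exists. Finally, the one substantive check, that the connection is unchanged, is left as ``I expect it to coincide''.

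\textbf{The missing idea is to intersect rather than project.} This is what the paper does: set $(N_L)_p:=N_p\cap L'$. Since $df(T_pM)\subset L'$, we have $N_p+L'=\R^{n+r}$, so $N_L$ has rank $r-1$. It meets $df(T_pM)$ trivially, hence it is complementary to $df(T_pM)$ in $L'$.

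The key observation is about $\alpha(X,Y)=\tilde\nabla_Xdf(Y)-df(\nabla_XY)$. It lies in $N$, and it also lies in $L'$ because both terms do. So $\alpha$ is $N_L$-valued, and the original splitting $\tilde\nabla_Xdf(Y)=df(\nabla_XY)+\alpha(X,Y)$ already is the splitting along $df(TM)\oplus N_L$. Hence $N_L$ induces $\nabla$.

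For the volume element, put $\theta_L^\perp(\zeta_1,\dots,\zeta_{r-1}):=\theta^\perp(\zeta_1,\dots,\zeta_{r-1},\xi_N)$, where $\xi_N$ is the $N_p$-component of $\xi$ in $df(T_pM)\oplus N_p$. Since $\xi\notin L'$ and $\xi-\xi_N\in df(T_pM)\subset L'$, $\xi_N$ completes a frame of $N_L$ to a frame of $N_p$. The $\omega$-values with $\xi$ and with $\xi_N$ agree, so \eqref{eq:omega_L} gives $\theta_L=\theta$. When $\xi\in N_p$, your projection coincides with $N_p\cap L'$, which is why your sketch looked plausible.

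\textbf{Part (2) is fine.} It is essentially the paper's argument via \eqref{eq:psi_xi}. The perturbation step is unnecessary: if $h^L_\psi$ is Morse, so is $h_{\psi\wedge\xi}$, which means the normalized $\psi\wedge\xi$ is already a regular value of $\nu$. Also, $\gamma(M)=2$ is better justified by noting that $f$ itself has a Morse height function with exactly two critical points, rather than by $M$ being homeomorphic to a sphere.
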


\begin{proof}
First, we prove $(1)$.
It suffices to show that there exists 
a transversal bundle with its volume element $(N_L,\theta_L^{\perp})$ of $f_L$ inducing $\nabla$ and $\theta$.
Since $f$ is equiaffine, 
there exist a transversal bundle and 
its volume element $(N,\theta^{\perp})$ for $f$ that 
induce $\nabla$ and $\theta$.
For each point $p\in M$, define
$(N_L)_p := N_p \cap L'$.
Then $N_L$ is a transversal bundle of $f_L$.
Since the image of $f$ is contained in $L$, 
both $\tilde{\nabla}_X df(Y)$ and $df(\nabla_X Y)$ lie in $L'$.
Hence their difference, 
the affine fundamental form $\alpha$ of $f$, 
takes values in $N_L$.
This shows that $\alpha$ is also the affine fundamental form of $f_L$.
Therefore, the decomposition of $\tilde{\nabla}_X df(Y)$ into its tangential component 
and $N_L$-component for $f_L$ coincides with 
the decomposition for $f$ with respect to $N$.
Thus, the induced connection from $N_L$ coincides with $\nabla$.
Next, we define a volume element $\theta_L^{\perp}$ on $N_L$ by
\[
\theta_L^{\perp}(\xi_1,\cdots,\xi_{r-1})
:=
\theta^{\perp}(\xi_1,\cdots,\xi_{r-1},\xi)
\qquad
(\xi_1,\cdots,\xi_{r-1}\in N_L).
\]
Let $\theta_L$ be the volume element on $M$ induced by $\theta_L^{\perp}$.
Then it can be written as
\[
\theta_L(X_1,\cdots,X_n)
=
\frac{
\omega_L\bigl(df_L(X_1),\cdots,df_L(X_n),\xi_1,\cdots,\xi_{r-1}\bigr)
}{
\theta_L^{\perp}(\xi_1,\cdots,\xi_{r-1})
}.
\]
where $X_1,\cdots,X_n$ are tangent vectors on $M$.
By the definitions of $\omega_L$ and $\theta_L^{\perp}$, the induced volume element $\theta_L$ coincides with $\theta$.
Hence $(\nabla,\theta)$ is induced by $(N_L,\theta_L^{\perp})$, and therefore
$
f_L:(M,\nabla,\theta)\to(L,\tilde{\nabla}|_L,\omega_L)
$
is an equiaffine immersion.

Next, we prove $(2)$.
We have the decomposition
\[
\wedge^{n+r-1}\R^{n+r}
=
\bigl(\wedge^{n+r-2}L'\bigr)\wedge\xi
\oplus
\wedge^{n+r-1}L'.
\]
Hence, for every $\psi \in S_L$, 
there exist $a(\neq 0),b \in R$ and $\zeta \in \wedge^{n+r-1}L'$ satisfying
\[
a\,\psi \wedge \xi+b\,\zeta \in S.
\]
Denote $\phi := a\,\psi \wedge \xi+b\,\zeta$.
Then $\tilde{h}_\phi = a\,\tilde{h}_{\psi \wedge \xi} + b\,\tilde{h}_\zeta$ holds by \eqref{eq:linear}.
The height function $h_\phi=\tilde h_\phi\circ f$ can be written as
$
h_\phi=a\,h_{\psi \wedge \xi}+b\,h_\zeta.
$
Here, $f(M)$ is contained in $L$, and $L'$ is parallel to $L$.
Therefore, we obtain that the function $h_\zeta=\tilde h_\zeta\circ f$ is constant.
Hence $h_\phi = a\,h_{\psi \wedge \xi} + c$ holds where $c$ is a constant.
As a result, we have
\[
h_\phi = - a\,h^L_{\psi} + c
\]
by \eqref{eq:psi_xi}.
Therefore a point $p\in M$ is a critical point of $h_\phi$ if and only if 
it is a critical point of $h^L_\psi$, and we have
\[
\#\operatorname{crit}(h_\phi)=\#\operatorname{crit}(h^L_\psi)
\]
when $h_\phi$ is a Morse function.
Since the total absolute curvature $\tau_S(f)$ is equal to $2$, 
the Morse height function $h_\phi$ has exactly two critical points by Fact~\ref{fa:min}.
Hence, the Morse height function $h^L_\psi$ has also exactly two critical points.
This implies that the total absolute curvature $\tau_{S_L}(f_L)$ is equal to $2$ 
for any unit ellipsoid $S_L$ in $\wedge^{n+r-2}L'$.

\end{proof}

By this Proposition~\ref{prop:equiaffine_restrict},
 it becomes possible to decrease the codimension while preserving the minimality of the total absolute curvature of $f$. 
This allows us to apply Lemma~\ref{lem:reduction} repeatedly in the following Theorem~\ref{thm:codim-one}.
We prove that an equiaffine immersion with minimal total absolute curvature 
is contained in an $(n+1)$-dimensional affine subspace.

\begin{theorem}\label{thm:codim-one}
Let $(M,\nabla,\theta)$ be an oriented compact $n$-dimensional manifold $M$ 
with an equiaffine structure $(\nabla, \theta)$, and 
$f:(M,\nabla,\theta)\to(\R^{n+r},\tilde{\nabla},\omega)$ an equiaffine immersion.
If the total absolute curvature $\tau_S(f)$ of $f$ is equal to $2$,
 then the image $f(M)$ is contained in an $(n+1)$-dimensional affine subspace of $\R^{n+r}$.
\end{theorem}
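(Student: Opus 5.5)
We argue by induction on the codimension $r$, alternating Lemma~\ref{lem:reduction} and Proposition~\ref{prop:equiaffine_restrict}. If $r=1$, there is nothing to prove, since $f(M)\subset\R^{n+1}$ itself. So assume $r\ge 2$ and that the statement holds for every equiaffine immersion of $(M,\nabla,\theta)$ into an affine space of dimension $n+r-1$, with any equiaffine structure of the type $(\tilde\nabla|_L,\omega_L)$, whose total absolute curvature is $2$ with respect to some unit ellipsoid.

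\textbf{Reduction by one.} Since $\tau_S(f)=2$ and $r\ge 2$, Lemma~\ref{lem:reduction} gives an $(n+r-1)$-dimensional affine subspace $L$ with $f(M)\subset L$. Let $L'$ be the linear subspace parallel to $L$. Fix $\xi\in\R^{n+r}\setminus L'$ and define $\omega_L$ by \eqref{eq:omega_L}. Proposition~\ref{prop:equiaffine_restrict}(1) says that
\[
f_L:(M,\nabla,\theta)\to(L,\tilde\nabla|_L,\omega_L)
\]
is an equiaffine immersion. Proposition~\ref{prop:equiaffine_restrict}(2) says that $\tau_{S_L}(f_L)=2$ for every unit ellipsoid $S_L$ in $\wedge^{n+r-2}L'$.

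\textbf{Applying the induction hypothesis.} We identify $L$ with $\R^{n+r-1}$ by choosing an origin in $L$ and a linear isomorphism $L'\cong\R^{n+r-1}$. Under this identification, $\tilde\nabla|_L$ becomes the flat connection and $\omega_L$ becomes a parallel volume form, i.e.\ a constant multiple of the standard one. Rescaling the volume form by a constant only rescales the transversal volume element $\theta^\perp_L$, so the equiaffine structure $(\nabla,\theta)$ is unaffected. It also leaves the condition $\tau=2$ intact: by Fact~\ref{fa:min} that condition means exactly that every Morse height function has two critical points, and the set of height functions is unchanged up to positive scaling. Hence $f_L$ satisfies the hypotheses of the theorem with codimension $r-1$. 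The induction hypothesis then yields an $(n+1)$-dimensional affine subspace $P\subset L$ with $f(M)=f_L(M)\subset P$. Since $P$ is also an affine subspace of $\R^{n+r}$, this proves the theorem.

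\textbf{Main obstacle.} The only delicate point is that the ambient structure changes at each step, so Lemma~\ref{lem:reduction} must be applicable to $f_L$ in $(L,\tilde\nabla|_L,\omega_L)$ and not only in the standard $\R^{n+r}$. This is why we check that $(L,\tilde\nabla|_L,\omega_L)$ is affinely equivalent to the natural equiaffine space $(\R^{n+r-1},\tilde\nabla,c\,\omega)$, and that the constant $c$ is irrelevant for both the equiaffine property and the value $2$ of the total absolute curvature. Once this is verified, applying the reduction step $r-1$ times gives the claim.
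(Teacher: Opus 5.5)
Your proposal is correct and is essentially the paper's argument: Lemma~\ref{lem:reduction} places $f(M)$ in an $(n+r-1)$-dimensional affine subspace $L$, Proposition~\ref{prop:equiaffine_restrict} shows that $f_L$ is again equiaffine with $\tau_{S_L}(f_L)=2$, and one iterates down to codimension one. Your extra check that $(L,\tilde\nabla|_L,\omega_L)$ is equivalent to a standard equiaffine space spells out the ``repeating this process'' step that the paper leaves implicit; the constant can even be normalized to $1$ by choosing a basis of $L'$ on which $\omega_L$ equals $1$, which also removes any sign issue in your scaling remark.
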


\begin{proof}
By Lemma~\ref{lem:reduction}, the image of $f$ is contained in an $(n+r-1)$-dimensional affine subspace.
Let $L$ denote this affine subspace.
Then, by $(1)$ of Proposition~\ref{prop:equiaffine_restrict}, 
the restriction of $f$ to $L$ induces an equiaffine immersion
$f_L:(M,\nabla,\theta)\to(L,\tilde{\nabla}|_L,\omega_L)$.
By $(2)$ of Proposition~\ref{prop:equiaffine_restrict}, 
$\tau_S(f)=\tau_{S_L}(f_L)=2$ holds for an arbitrary unit ellipsoid $S_L$ in $\wedge^{n+r-2}L'$.
Hence the property that the total absolute curvature equals $2$ is preserved under reduction of codimension.
Repeating this process until the codimension becomes $1$, we obtain the desired conclusion.
\end{proof}

\subsection{Minimal total absolute curvature and convexity}\label{subsec:cvx}

In this subsection, we prove Theorem~\ref{thm:main}.
We show that an equiaffine immersion is a convex hypersurface 
if and only if the total absolute curvature is equal to $2$ in the codimension-one case.
We prove that an equiaffine immersion which is a convex hypersurface has minimal total absolute curvature
in Theorem~\ref{thm:cvx_to_min}
and prove the converse in Theorem~\ref{thm:min_to_cvx}.  
Finally, combining Theorems~\ref{thm:cvx_to_min}, \ref{thm:min_to_cvx} and the results of subsection~\ref{subsec:red}, 
we complete the proof of Theorem~\ref{thm:main}.

By Theorem \ref{thm:codim-one}, we can regard an equiaffine immersion $f:M \to \R^{n+r}$ with $\tau_S(f) =2$ 
as the immersion into $(n+1)$-dimensional affine space $\R^{n+1}$ without loss of generality.
First, we prove that a convex hypersurface has minimal total absolute curvature.

\begin{theorem}\label{thm:cvx_to_min}
Let $(M,\nabla,\theta)$ be an oriented compact $n$-dimensional manifold $M$ 
with an equiaffine structure $(\nabla, \theta)$ and 
$f : (M,\nabla,\theta) \to (\R^{n+1}, \tilde{\nabla}, \omega)$ be an equiaffine immersion.
If an equiaffine immersion $f$ is a convex hypersurface,
then the total absolute curvature $\tau_S(f)$ of $f$ is equal to $2$.
\end{theorem}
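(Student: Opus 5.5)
The plan is to use Fact~\ref{fa:min}: it suffices to show that every Morse height function $h_\phi=\tilde h_\phi\circ f$ with $\phi\in S\setminus\nu(C)$ has exactly two critical points. Together with \eqref{eq:average}, this gives $\tau_S(f)=2$. Since $M$ is compact, every height function attains its maximum and minimum. When $h_\phi$ is Morse it is nonconstant, so these are two distinct critical points and $\#\operatorname{crit}(h_\phi)\ge 2$. The work is therefore to show that there are no other critical points.

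Fix $\phi=\vect v_1\wedge\cdots\wedge\vect v_n$ with $h_\phi$ Morse, and write $H=\operatorname{span}\{\vect v_1,\dots,\vect v_n\}$, a linear hyperplane of $\R^{n+1}$. Since $\tilde h_\phi(\vect v)=\omega(\phi,\vect v)$ is a linear functional with kernel $H$, a point $q\in M$ is critical for $h_\phi$ exactly when $df_q(T_qM)=H$. Equivalently, the affine hyperplane $f(q)+H$ is tangent to $f(M)$ at $f(q)$. This reformulation is metric-free, which is what the affine setting needs.

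Now use convexity. $f$ is an embedding onto $\partial K$ for a compact convex body $K$ with nonempty interior; $K$ has nonempty interior because $f(M)$ is an $n$-dimensional hypersurface. At a smooth boundary point of a convex body, the tangent hyperplane is a supporting hyperplane. So if $q$ is critical, $f(q)+H$ supports $K$, and $\tilde h_\phi$ restricted to $K$ attains either its maximum or its minimum at $f(q)$. Hence every critical point of $h_\phi$ is a global maximum point or a global minimum point of $h_\phi$.

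The remaining, and main, step is to show that the maximum is attained only once. The set $F=\{x\in K:\tilde h_\phi(x)=\max\}$ is a face of $K$, and $f^{-1}(F)$ consists of critical points. Since $h_\phi$ is Morse, its critical points are isolated, so $f^{-1}(F)$ is finite. But $F$ is convex, so if it contained two points it would contain the segment between them, giving infinitely many critical points. Thus $F$ is a single point, and the same argument applies to the minimum. Therefore $h_\phi$ has exactly two critical points, and Fact~\ref{fa:min} yields $\tau_S(f)=2$.

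The two points needing care are the tangent-equals-supporting-hyperplane fact, which should be stated without any metric, and the identification $\operatorname{crit}(h_\phi)\leftrightarrow\nu^{-1}(\phi)$. The latter is already recorded from \cite[Lemma 3.1]{Koi1}.
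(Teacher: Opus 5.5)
Your proposal is correct and follows essentially the paper's route. By convexity, the tangent hyperplane at any critical point of the Morse height function $h_\phi$ is a supporting hyperplane, and two critical points on the same supporting hyperplane would put the whole segment between them into $f(M)$ as critical points, contradicting the Morse property. Your version (every critical point is a global extremum, then the maximal face $F$ must be a single point, using that $f$ is an embedding) is a slightly more explicit form of the paper's pigeonhole argument with three critical points.
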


\begin{proof}
Denote by $C$ the set of critical points of the Gauss map $\nu$.
We prove the statement by contradiction.  
Assume that there exists a $\phi \in S \setminus \nu(C)$ such that the height function
$h_{\phi}$ has three critical points $p_1, p_2, p_3 \in M$.
Since $\phi$ is not a critical value of $\nu$, the height function $h_{\phi}$ is a Morse function.
We denote by $\Pi_{\phi}$ the hyperplane in $\R^{n+r}$ corresponding to $\phi$.
For each $p_i$, since $p_i$ is a critical point of the height function $h_{\phi}$, 
the hyperplane $\mathcal{P}_i$ parallel to $\Pi_{\phi}$ passing through $f(p_i)$ 
must be a supporting hyperplane of the convex hypersurface $f(M)$.
Hence, among the three supporting hyperplanes, at least two hyperplanes must coincide.
Therefore, by relabeling if necessary, we may assume $\mathcal{P}_1 = \mathcal{P}_2$.
We define
\[
   \ell(t) := t f(p_1) + (1-t)f(p_2) \quad ( t \in [0,1]).
\]
Because $f$ is convex, we have
$
   \ell(t) \in H(f(M)) 
$
for every $t \in [0,1]$,
where $H(f(M))$ denotes the smallest convex set containing $f(M)$.
Since $\ell(t) \in \mathcal{P}_1(=\mathcal{P}_2)$, each $\ell(t)$ lies on the supporting hyperplane.
Hence
$
   \widetilde{h}_{\phi}(\ell(t))
$
is constant.
On the other hand, because $f$ is convex, the boundary of $H(f(M))$
coincides with $f(M)$ itself.
Thus,
$
   \ell(t) \in f(M)
$
holds for every $t \in [0,1]$.
This implies that $h_{\phi}$ has a one-parameter family of critical points,
which shows that $h_{\phi}$ is not a Morse function.
This contradicts the assumption $\phi \in S \setminus \nu(C)$.
\end{proof}

Next, we prove that an equiaffine immersion with minimal total absolute curvature 
is a convex hypersurface in Theorem~\ref{thm:min_to_cvx}.
When the codimension is $1$, the transversal ellipsoid bundle $B$ is written as
$B = \{(p,\pm \mu(p)) \mid p \in M\} \subset M \times \R$, 
where $\mu$ is a positive function on $M$ such that $\tilde{\nu}_N (p,\pm \mu(p)) \in S$.
In other words, 
\begin{equation}\label{eq:nu_mu}
   \tilde{\nu}_N (p,\pm \mu(p)) = \pm \mu(p) \, df_p (\vect{e}_1) \wedge \cdots \wedge df_p (\vect{e}_n) \in S
\end{equation}
holds, where $\{\vect{e}_1,\cdots,\vect{e}_n\}$ is a frame of $T_p M$ such that $\theta (\vect{e}_1 , \cdots \vect{e}_n) = 1$.
Thus, we regard the Gauss map $\nu$ as a map on $M$ by setting
\[
   \nu(p) := \tilde{\nu}_N (p, \mu(p)) \quad (p \in M).
\]
Then, by \eqref{eq:nu_mu}, 
the value of the Gauss map $\nu(p)$ corresponds to the tangent hyperplane of $f(M)$ at $f(p)$.

The following lemma is used in the proof of Theorem~\ref{thm:min_to_cvx}.

\begin{lemma}\label{lem:nu_critical_point}
Denote by $C$ the set of critical points of the Gauss map $\nu:M \to S$
and by $\mathcal{P}_p$ the tangent hyperplane of $f(M)$ at $f(p)$.
We assume that the total absolute curvature $\tau_S(f)$ of $f$ is equal to $2$.
\begin{itemize}
   \item[$(1)$] If a point $q$ belongs to the boundary $\partial C$ of $C$, 
   then the tangent hyperplane $\mathcal{P}_q$ is a supporting hyperplane of $f(M)$.

   \item[$(2)$] If a point $q$ belongs to the interior $C^{\circ}$ of $C$, 
then there exists a point $q_0\in \partial C$ such that
$\mathcal{P}_q = \mathcal{P}_{q_0}$ and $\mathcal{P}_q$ is a supporting hyperplane of $f(M)$.

\end{itemize}
\end{lemma}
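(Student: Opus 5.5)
For $(1)$, we work with the regular values and use Fact~\ref{fa:min}. Let $q\in\partial C$. Then there is a sequence $q_k\in M\setminus C$ with $q_k\to q$. Since $C$ is closed and $\nu$ is a local diffeomorphism near each $q_k$, every $q_k$ has a neighbourhood $U_k$ with $U_k\cap C=\emptyset$ on which $\nu$ is a diffeomorphism onto an open set. By Sard's theorem $\nu(C)$ has measure zero, so $\nu(U_k)\setminus\nu(C)$ is nonempty (indeed of positive measure). We may therefore perturb $q_k$ inside $U_k$ so that $\phi_k:=\nu(q_k)$ is a regular value of $\nu$ and still $q_k\to q$. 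Then $h_{\phi_k}$ is a Morse height function, and by Fact~\ref{fa:min} it has exactly two critical points. These must be its maximum and its minimum. Since $q_k$ is a critical point of $h_{\phi_k}$ (because $\nu(q_k)=\phi_k$, cf.\ \cite[Lemma 3.1]{Koi1}), the point $q_k$ is a global extremum of $h_{\phi_k}$. Hence the tangent hyperplane $\mathcal{P}_{q_k}$ is a supporting hyperplane of $f(M)$: all of $f(M)$ lies in one closed half-space bounded by $\mathcal{P}_{q_k}$.

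Next we pass to the limit. Both $\nu$ and $f$ are continuous, so $\phi_k\to\nu(q)$ and $h_{\phi_k}\to h_{\nu(q)}$ uniformly on the compact $M$ (by \eqref{eq:linear}, $\tilde h_\phi$ is linear in $\phi$). For each $k$ we have either $h_{\phi_k}(x)\le h_{\phi_k}(q_k)$ for all $x$, or $\ge$ for all $x$. Passing to a subsequence on which the same alternative holds, we obtain in the limit $h_{\nu(q)}(x)\le h_{\nu(q)}(q)$ for all $x$ (or $\ge$). Thus $\mathcal{P}_q$, which corresponds to $\nu(q)$ by \eqref{eq:nu_mu}, is a supporting hyperplane. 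Note that only the sign of $h$ matters, since $S$ is symmetric under $\phi\mapsto-\phi$.

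For $(2)$, let $q\in C^\circ$ and let $K$ be the connected component of $C^\circ$ containing $q$. We will show that $\nu$ is constant on $K$ up to the normalisation by $\mu$, i.e.\ that the tangent hyperplane direction is constant on $K$. This is the main obstacle, since a map whose differential is degenerate everywhere need not be constant. We will try the following. The closure of $\nu(K)$ is contained in $\nu(C)$, which has measure zero. Moreover, by \eqref{eq:pullback} the function $G$ vanishes on $K$, so that $\det(\tilde h_\eta\circ\alpha)=0$ there. However, to obtain actual constancy we plan to argue via the supporting property. Let $A$ be the set of points of $\overline K$ at which $\mathcal{P}_x$ is supporting; $A$ is closed by the limiting argument above, and $\partial K\subset\partial C\subset A$ by $(1)$.

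To show that $A\cap K$ is open, take $x\in A$ and suppose $f(M)$ lies on one side of $\mathcal{P}_x$. Consider the height function $h_{\nu(x)}$. Since $\mathcal{P}_x$ is supporting and $h_{\nu(x)}$ is critical at $x$, the point $x$ is a global extremum of $h_{\nu(x)}$. On $K$ the image $\nu(K)$ has empty interior, so nearby hyperplanes $\nu(y)$ are approximated by regular values $\phi$ near $\nu(y)$. Each such $h_\phi$ has exactly two critical points, namely its global max and min. We would then show, by the same sequence argument as in $(1)$ applied to regular values accumulating on $\nu(y)$ (possible since $\nu$ is not locally onto), that each $\mathcal{P}_y$ is supporting.

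Finally, once every $\mathcal{P}_y$ with $y\in K$ is supporting, we deduce that $f(K)$ lies in a single hyperplane. Indeed, a connected immersed piece all of whose tangent hyperplanes are supporting and whose Gauss map is nowhere a local diffeomorphism lies in the common supporting hyperplane, because $f(K)$ is locally on one side of each of its own tangent planes with the tangent direction varying continuously. In particular $\mathcal{P}_q=\mathcal{P}_{q_0}$ for any $q_0\in\partial K\subset\partial C$; here $\partial K\neq\emptyset$, since $K\neq M$ because $\tau_S(f)=2$ forces $G\not\equiv0$. This $\mathcal{P}_q$ is supporting by $(1)$. The step proving constancy of $\mathcal{P}$ on $K$ is where we expect the real work, and we would first attempt it via the approximation of $\nu(y)$ by regular values outside $\nu(C)$ as sketched.
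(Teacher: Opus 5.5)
Your part $(1)$ is correct and follows the paper's argument: approximate $q$ by regular points of $\nu$, use Fact~\ref{fa:min} to make them global extrema of their height functions, and pass to the limit. Your perturbation making $\nu(q_k)$ a regular value is actually needed. The condition $q_k\notin C$ alone does not make $h_{\nu(q_k)}$ Morse, because another preimage of $\pm\nu(q_k)$ may be a critical point of $\nu$.

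Part $(2)$, however, has a genuine gap, at exactly the step you flag. The mechanism you propose for showing that $\mathcal{P}_y$ is supporting when $y\in K\subset C^{\circ}$ cannot work. A whole neighbourhood $V$ of $y$ lies in $C$, so $\nu(V)\subset\nu(C)$, and no critical point of any Morse height function lies in $V$. Critical points of $h_{\phi_n}$ with regular $\phi_n\to\nu(y)$ therefore accumulate only at points $q_0\in\overline{M\setminus C}$, so $q_0\neq y$, with $\nu(q_0)=\pm\nu(y)$. The limiting argument of $(1)$ then shows only that the hyperplane through $f(q_0)$ \emph{parallel} to $\mathcal{P}_y$ supports $f(M)$.

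This is precisely what the paper's proof of $(2)$ extracts: regular values $\phi_n\to\nu(q)$, preimages $p_n$, a limit point $q_0$, then $(1)$ applied at $q_0$. But $\nu$ only records the direction of the tangent hyperplane. To pass from there to ``$\mathcal{P}_q$ itself is supporting'' one still needs $h_{\nu(q)}(q)=h_{\nu(q)}(q_0)$. Your open--closed scheme on $K$ does not supply this, since the openness of $A\cap K$ is only announced.

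Moreover, the intermediate goal you set is false. The map $\nu$ need not be constant on a component $K$ of $C^{\circ}$. Nor must a connected piece whose tangent hyperplanes are all supporting, and whose Gauss map is nowhere a local diffeomorphism, lie in one hyperplane.

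For a counterexample, take a smooth closed convex surface of revolution in $\R^3$ whose profile curve contains a segment parallel to the axis, and which has positive Gauss curvature off the corresponding band. This is a smoothed capsule. With the Euclidean unit normal as transversal field it is equiaffine, and $\tau_S(f)=2$ by Theorem~\ref{thm:cvx_to_min}. Then $C^{\circ}$ is the open cylindrical band and all its tangent planes are supporting. Yet $\nu$ is not constant there, and $\mathcal{P}_q\neq\mathcal{P}_{q_0}$ for all but two points $q_0\in\partial K$.

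What does hold in this example is that $\mathcal{P}_q$ is constant along the straight ruling through $q$, which runs out to $\partial C$. This is what the formulation of $(2)$ with $q_0\in\partial C$ is modelled on, and it is what puts $f(q)$ and $f(q_0)$ on the same supporting hyperplane. An argument of that kind is the missing idea. For example, one could show that the tangent hyperplane is constant along the leaves of the kernel of $\alpha_\xi$, and then that these leaves reach $\partial C$. Constancy of $\nu$ on all of $K$ is not available.
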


\begin{proof}
First, we prove $(1)$.
Since $q$ is a boundary point of $C$, 
there exists a sequence $\{p_n\}_{n \in \boldsymbol{N}}$ such that 
\[
\lim_{n \to \infty} p_n = q \mbox{ and } p_n \in M \setminus C \quad (n \in \boldsymbol{N}).
\]
For each $p_n$, since $h_{\nu(p_n)}$ is a Morse function and 
$\tau_S(f)=2$, the number of critical points is $2$.
Hence the tangent hyperplane at each critical point is a supporting hyperplane.
If it is not a supporting hyperplane, then the critical point $p_n$ would not be 
a minimum or maximum, and therefore $h_{\nu(p_n)}$ would have at least 
three critical points, which is a contradiction.
Thus, for each $p_n$, the tangent hyperplane $\mathcal{P}_{p_n}$ is a supporting hyperplane.
Without loss of generality, we may assume that $p_n$ attains minimum.
Then, we have
\[
   h_{\nu(p_n)}(x) \ge h_{\nu(p_n)}(p_n) \quad (\forall x \in M).
\]
Fix $x \in M$ and take the limit as $n\to\infty$.  
Then
\[
   h_{\nu(q)}(x) \ge h_{\nu(q)}(q) 
\]
holds. 
Therefore, $h_{\nu(q)}(x) \ge h_{\nu(q)}(q)$ holds for every $x \in M$.
Hence the tangent hyperplane $\mathcal{P}_q$ at $f(q)$ is a supporting hyperplane (see Figure~\ref{fig:tangent}).

Next, we prove $(2)$.
Since $\nu(C)$ is a measure zero subset of $S$ by Sard's theorem,
every neighborhood of $\nu(q)$ contains regular values of $\nu$.
Thus there exists a sequence $\{\phi_n\}$ in $S \setminus \nu(C)$
such that $\phi_n \to \nu(q)$ as $n\to\infty$.
Since each $\phi_n$ is a regular value, there exists a regular sequence $\{p_n\}_{n \in \boldsymbol{N}}$
such that $\nu(p_n) = \phi_n \, (n \in \boldsymbol{N})$.
Because $M$ is sequentially compact, 
the sequence $\{p_n\}_{n \in \boldsymbol{N}}$ has a convergent subsequence.
Let $\{p_{n_k}\}_{k \in \boldsymbol{N}}$ be such a subsequence and let
$
    q_0 := \lim_{k\to\infty} p_{n_k}.
$
Then
\[
  \nu(q_0)
   = \nu\!\left(\lim_{k\to\infty} p_{n_k}\right)
   = \lim_{k\to\infty} \nu(p_{n_k})
   = \lim_{k\to\infty} \phi_{n_k}
   = \nu(q)
\]
holds. Hence $\nu(q_0) = \nu(q)$.
The tangent hyperplane of $f(M)$ at $f(q)$ 
is corresponding to the value of the Gauss map $\nu(q)$.
Therefore, $\mathcal{P}_q = \mathcal{P}_{q_0}$.
In addition, $q_0$ is a critical point.
Since every neighborhood of $q_0$ contains points of $M\setminus C$
by the construction of the sequence, $q_0$ lies on the boundary of $C$ (see Figure~\ref{fig:nu_c_p}).
By the argument of $(1)$ of this lemma, $\mathcal{P}_q = \mathcal{P}_{q_0}$ is also a supporting hyperplane.
\end{proof}

\begin{figure}
\begin{center}
\includegraphics[width=0.75\textwidth]{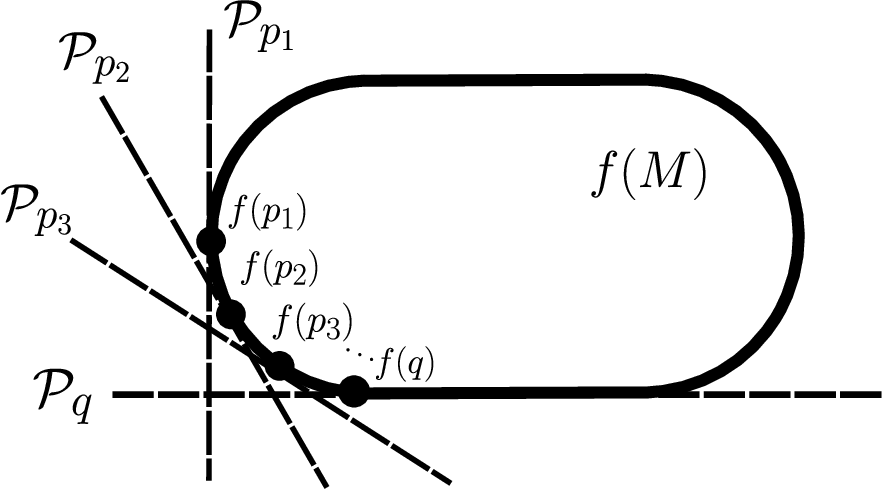}
\end{center}
\caption{Tangent hyperplanes at $f(p_n)$ and $f(q)$  (the proof of $(1)$ of Lemma~\ref{lem:nu_critical_point}).}
\label{fig:tangent}
\end{figure}

\begin{figure}
\begin{center}
\includegraphics[width=0.75\textwidth]{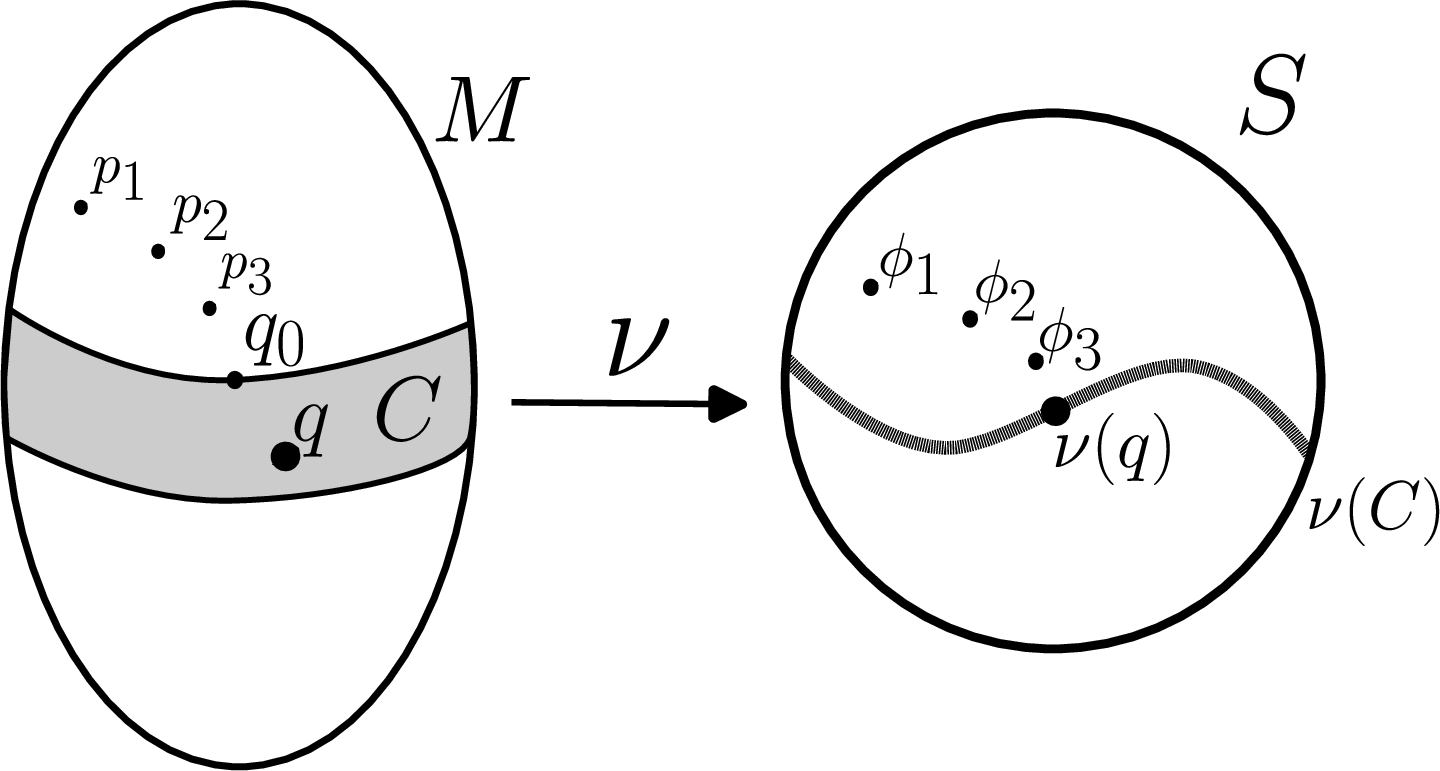}
\end{center}
\caption{ The correspondence between sequences $\{\phi_n\}$ and $\{p_n\}$  (the proof of $(2)$ of Lemma~\ref{lem:nu_critical_point}).}
\label{fig:nu_c_p}
\end{figure}

By using Lemma~\ref{lem:nu_critical_point}, 
we show that the image of an equiaffine immersion with minimal total absolute curvature is a convex hypersurface in the following Theorem~\ref{thm:min_to_cvx}.

\begin{theorem}\label{thm:min_to_cvx}
Let $(M,\nabla,\theta)$ be an oriented compact $n$-dimensional manifold with an equiaffine structure $(\nabla, \theta)$ and 
$f : (M,\nabla,\theta) \to (\R^{n+1}, \tilde{\nabla}, \omega)$ be an equiaffine immersion.
If the total absolute curvature $\tau_S(f)$ of $f$ is equal to $2$, 
then $f(M)$ is a convex hypersurface.
\end{theorem}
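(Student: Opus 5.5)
The plan is to show that every tangent hyperplane $\mathcal{P}_p$ of $f(M)$ is a supporting hyperplane. From this we will deduce that $f(M)$ is the boundary of a convex body, using only height functions and the Gauss map $\nu\colon M\to S$.

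\emph{Step 1: every tangent hyperplane supports $f(M)$.} Split $M$ into $M\setminus C$, $\partial C$ and $C^{\circ}$.

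For $p\in M\setminus C$, the value $\nu(p)$ is regular, so $h_{\nu(p)}$ is Morse. By Fact~\ref{fa:min} it has exactly two critical points, hence only its maximum and minimum. Since $p$ is critical for $h_{\nu(p)}$ by \cite[Lemma 3.1]{Koi1}, $p$ must be an extremum, and so $\mathcal{P}_p$ supports $f(M)$.

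For $p\in \partial C$ and $p\in C^{\circ}$ we invoke Lemma~\ref{lem:nu_critical_point}~(1) and~(2) respectively. Thus for every $p\in M$, the function $h_{\nu(p)}$ attains a global extremum at $p$, and $f(M)$ lies in one closed half-space $\mathcal{P}_p^{+}$ bounded by $\mathcal{P}_p$.

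\emph{Step 2: $f(M)$ is the boundary of its convex hull.} Let $K:=H(f(M))$. We have $K\subset\bigcap_p \mathcal{P}_p^{+}$, and each $f(p)$ lies on the supporting hyperplane $\mathcal{P}_p$, so $f(M)\subset\partial K$.

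Conversely, $K$ has nonempty interior: otherwise $f(M)$ would lie in an affine hyperplane $\Pi$, and the height function vanishing on $\Pi$ would be constant on $M$. A Morse height function obtained by a small perturbation of it would then still have many critical points, which should contradict Fact~\ref{fa:min}. Alternatively, an immersed compact $n$-manifold cannot lie in an $n$-plane, by invariance of domain.

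Since $K$ has nonempty interior, $\partial K$ is homeomorphic to $S^n$. By Fact~\ref{fa:Koi}~(2), $M$ is homeomorphic to $S^n$ as well. The map $f\colon M\to\partial K$ is a local homeomorphism: it is an immersion into the topological $n$-manifold $\partial K$, and invariance of domain applies. Since $M$ is compact, $f$ is a covering map. As $\partial K\cong S^n$ is simply connected for $n\ge 2$, this covering is a homeomorphism, so $f$ is an embedding onto $\partial K$.

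\emph{Case $n=1$.} Here we argue directly: a closed curve all of whose tangent lines are supporting lines has a monotone tangent direction. Two points with the same tangent line would force a segment, so the Gauss map winds exactly once and the curve is simple.

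\emph{Main obstacle.} The main obstacle I expect is Step 2, namely ruling out that $f$ covers $\partial K$ several times or fails to be onto. The key point is that $f(M)\subset\partial K$ together with the local homeomorphism property gives openness of $f(M)$ in $\partial K$, while compactness gives closedness. Hence $f(M)=\partial K$, and the covering argument then finishes the proof.
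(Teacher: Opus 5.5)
Your Step~1 is exactly the paper's proof, which stops once every $\mathcal{P}_p$ is shown to be supporting and then declares $f(M)$ convex. Step~2 goes beyond the paper, and usefully so. The support property alone does not make $f$ injective (see below), whereas the paper's notion of convex hypersurface is an embedded one with $f(M)=\partial H(f(M))$: this is used in the proof of Theorem~\ref{thm:cvx_to_min} and claimed in Theorem~\ref{thm:main}. For $n\ge 2$ your chain of arguments is sound: $f(M)\subset\partial K$; $\operatorname{int}K\neq\emptyset$ by invariance of domain; $f\colon M\to\partial K$ is a local homeomorphism from a compact space, hence a finite covering; hence a homeomorphism, because $M\cong S^n$ is connected and $\partial K\cong S^n$ is simply connected. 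The genuine gap is the case $n=1$. Your claim that two points with the same tangent line force a segment, so that the Gauss map winds exactly once, is false. The immersion $t\mapsto(\cos 2t,\sin 2t)$ of $S^1$ has every tangent line supporting and a monotone tangent direction, yet it winds twice and is not simple. The parameters $t$ and $t+\pi$ share a tangent line because they share an image point, and no segment appears. So the conclusion of Step~1 cannot by itself exclude multiple coverings; you must use $\tau_S(f)=2$ a second time.

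The repair is short and works in every dimension. Your argument already shows that $f\colon M\to\partial K$ is a $k$-sheeted covering for some $k\ge 1$ (invariance of domain is trivial when $n=1$). By Sard's theorem you can pick $\phi$ with $h_\phi$ Morse. Since $f(M)=\partial K$, if $x\in\partial K$ maximizes $\tilde h_\phi$ on $\partial K$, then all $k$ points of $f^{-1}(x)$ are global maxima of $h_\phi$. Likewise there are $k$ global minima, distinct from the maxima because a Morse function is not constant. Hence $\#\operatorname{crit}(h_\phi)\ge 2k$, and Fact~\ref{fa:min} forces $k=1$. This settles $n=1$ and makes the appeal to Fact~\ref{fa:Koi}~(2) and to simple connectivity unnecessary.

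There are also three smaller points.
\begin{enumerate}
\item Your first argument for $\operatorname{int}K\neq\emptyset$ (``should contradict'') is not a proof, but the invariance-of-domain argument suffices.
\item In Step~1, $p\notin C$ only makes $p$ a regular point; $\nu(p)$ may still be a critical value through another preimage lying in $C$. This is the same imprecision as in the paper. To fix it, approximate $p$ by nearby points whose Gauss image is a regular value (dense in the open set $\nu(U)$ by Sard), then pass to the limit as in Lemma~\ref{lem:nu_critical_point}~(1).
\item Step~1 is only as strong as Lemma~\ref{lem:nu_critical_point}~(2), which you inherit from the paper. Its proof yields $q_0$ with $\nu(q_0)=\nu(q)$, which by itself only makes $\mathcal{P}_q$ and $\mathcal{P}_{q_0}$ parallel. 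Equality additionally needs $h_{\nu(q)}(q)=h_{\nu(q)}(q_0)$.
\end{enumerate}
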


\begin{proof}
It suffices to show that for every $p\in M$, the tangent hyperplane $\mathcal{P}_p$ 
is a supporting hyperplane of $f(M)$.
If $p\in M\setminus C$, then $h_{\nu(p)}$ is a Morse function and 
$p$ is one of its critical points which attains minimum or maximum.  
Thus the tangent hyperplane at $f(p)$ is a supporting hyperplane.
If $p\in C$, then the statement follows from Lemma \ref{lem:nu_critical_point}.
Therefore for every $p\in M$, the tangent hyperplane at $f(p)$
is a supporting hyperplane of $f(M)$.
Thus $f(M)$ is convex.
\end{proof}

We remark that 
the argument in the proof of Theorem~\ref{thm:min_to_cvx} yields an alternative proof of the classical theorem 
of Chern and Lashof for hypersurfaces when the ambient affine space is equipped with a Euclidean structure. 
The point is that the proof does not use metric notions, 
but only the behavior of height functions and the Gauss map.

Finally, we show the proof of Theorem~\ref{thm:main}.

\begin{proof}[The proof of Theorem~\ref{thm:main}]
If the total absolute curvature $\tau_S (f)$ is equal to $2$, 
the image $f(M)$ of $f$ is contained in an $(n+1)$-dimensional affine subspace by Theorem~\ref{thm:codim-one}.
Denote by $L$ this $(n+1)$-dimensional affine subspace.
Because of Proposition~\ref{prop:equiaffine_restrict}, 
the total absolute curvature is still $2$ 
when $f$ is regarded as an equiaffine immersion into $L$.
Hence, by Theorem~\ref{thm:min_to_cvx}, 
the image $f(M)$ of $f$ is a convex hypersurface embedded in $L$.
Next, we prove the converse.
If an equiaffine immersion 
$f : (M,\nabla,\theta) \to (\R^{n+r}, \tilde{\nabla}, \omega)$
is a convex hypersurface embedded in an $(n+1)$-dimensional affine subspace $L$, 
then $f$ can be regarded as an equiaffine immersion $f_L$ into $L$.
Moreover, the total absolute curvature of $f_L$ is equal to $2$ by Theorem~\ref{thm:cvx_to_min}.
By Proposition~\ref{prop:equiaffine_restrict}, 
the total absolute curvature does not change even 
after the codimension is reduced.
Hence the total absolute curvature of the original equiaffine immersion $f$ is $2$.
\end{proof}

\section{Examples}\label{sec:example}

In this section, we present two examples of equiaffine immersions whose total absolute
curvature attains the minimal value.
The following first example is an ellipsoid, which is locally strictly convex and whose
affine fundamental form is non-degenerate everywhere.

\begin{example}\label{ex:ellipsoid}
 Let $\R^{n+1}$ be the $(n+1)$-dimensional affine space with the equiaffine structure
 $\tilde{\nabla}$ and 
$\omega.$
Let $(\vect{e}_1 , \cdots , \vect{e}_{n+1})$ be a basis satisfying 
$\omega(\vect{e}_1 , \cdots , \vect{e}_{n+1}) = 1$.
We fix the unit ellipsoid $S$ by
\[
S =
\left\{
\sum_{i=1}^{n+1} a_i\, \vect{e}_1 \wedge \cdots \wedge \widehat{\vect{e}}_i \wedge \cdots \wedge \vect{e}_{n+1}
\ \middle|\ 
\sum_{i=1}^{n+1} a_i^2 = 1
\right\}.
\]
Let $\mathcal{S}^n$ be the unit sphere in the Euclidean space $\E^{n+1}$
 and $x_i : \mathcal{S}^n \to \R$ be the $i$-th coordinate function of $\mathcal{S}^n$ for $i=1,\cdots,n+1$.
Define a map $f : \mathcal{S}^n \to \R^{n+1}$ by
\[
 f(p) := x_1 (p) \vect{e}_1 + \cdots + x_{n+1} (p) \vect{e}_{n+1} \quad (p \in \mathcal{S}^n).
\]
Then the position vector $f(p)$ is an equiaffine transversal vector field.
For $\phi = \sum_{i=1}^{n+1} a_i\, \vect{e}_1 \wedge \cdots \wedge \widehat{\vect{e}}_i \wedge \cdots \wedge \vect{e}_{n+1}$,
a point $p \in \mathcal{S}^n $ is a critical point of the height function $h_{\phi} = \tilde{h}_{\phi} \circ f$
if and only if $df_p (X) \wedge \phi = 0$ for every $X \in T_p \mathcal{S}^n$.
In other words, 
\begin{equation}\label{eq:crit}
    a_1 \cdot (d x_1)_p (X) + \cdots + a_{n+1} \cdot (d x_{n+1})_p (X) = 0
\end{equation}
holds for every $X \in T_p \mathcal{S}^n$.
The equation \eqref{eq:crit} means that the differential of the linear functional $\sum a_i x_i$ vanishes on $T_p \mathcal{S}^n$.
Hence $p$ is parallel to the vector $(a_1,\cdots,a_{n+1})$, which implies that critical points are $p = \pm (a_1,\cdots,a_{n+1})$.
Therefore, for every $\phi \in S$,
the height function $h_{\phi}$ has exactly two critical points.
Thus, the total absolute curvature $\tau_S(f)$ is equal to $2$.
\end{example}

On the other hand, 
the following second example is a convex surface that admits degenerate points where
the affine fundamental form degenerates.
However, even in such a situation, we show that the surface has a
degenerate metric with good geometric properties.
The following example shows that even when the affine fundamental form degenerates, 
convex hypersurfaces with minimal total absolute curvature still admit meaningful geometric structures. 
It also suggests new directions for affine differential geometry 
beyond the non-degenerate affine metric.

\begin{example}\label{ex:semidefinite}
Let $\Sigma$ be a closed surface in the affine space $\R^{3}$ defined by
\[
   \Sigma := \{ (x,y,z) \in \R^3 \mid (z-\sqrt{x^2 + y^2})^4 + (z+\sqrt{x^2 + y^2})^4 = 16 \}.
\]
Then $\Sigma$ is a convex surface in the affine space (see Figure~\ref{fig:semi}).
Define functions $E(u)$ and $F(u)$ by
\[
   E(u) := u-(1-u^4)^{1/4},
   \qquad
   F(u) := u+(1-u^4)^{1/4}.
\]
Local coordinate representations of $\Sigma$ are given by
$f_{\pm}: [-(\frac{1}{2})^{1/4},(\frac{1}{2})^{1/4}] \times [-\pi,\pi] \to \R^3$,
\[
   f_{\pm}(u,v)
   :=
   \left(
     E(u) \cos v,
     E(u) \sin v,
     \pm F(u)
   \right).
\]
From now on, we consider $f_{+}$.
Set
$
   \delta(u) := \sqrt{(E'(u))^2 + (F'(u))^2},
$
where $'$ denotes differentiation with respect to $u$.
Define a vector field $\xi$ by
\[
   \xi(u,v)
   :=
   \frac{1}{\delta(u)}
   \bigl(
     F'(u)\cos v,\,
     F'(u)\sin v,\,
     -E'(u)
   \bigr).
\]
Then $\xi$ is an equiaffine transversal vector field.
The determinant of the affine fundamental form $\alpha_\xi$ is given by (cf.\ \eqref{eq:affine-fund})
$
   \det \alpha_\xi
   =
   u^2 \beta(u)
$
where 
\begin{align*}
\beta (u) = \frac{6}{\delta(u)^2(1-u^4)^{13/4}}
   \cdot
   \bigl(-u+(1-&u^4)^{1/4}\bigr)
   \bigl(-1+u^4+u(1-u^4)^{3/4}\bigr) \\
   &\cdot
   \bigl(-1+u^3(1-u^4)^{1/4}+u(1-u^4)^{3/4}\bigr).
\end{align*}
Then, $\beta(u) > 0$.
Let ${\rm Deg}(\alpha_\xi)$ denote the set of points at which $\alpha_\xi$ degenerates.
Then ${\rm Deg}(\alpha_\xi)$ is given by
$
 {\rm Deg} (\alpha_\xi) = \{ (0,v) \mid v \in [-\pi,\pi] \}.
$
We set a $C^{\infty}$ function
$\lambda(u) := u \sqrt{ \beta(u) }$.
Thus $\lambda (u) = 0$ if and only if $u=0$.
In addition, 
$\lambda'(0)$ does not vanish. 
Then $\alpha_\xi$ satisfies the conditions
of a \emph{Kossowski metric} (see \cite{HHNSUY}).
This is a positive semi-definite metric describing the properties of surfaces with singular points.

\end{example}

\begin{figure}[htb]
\centering
 \begin{tabular}{c}
\resizebox{6cm}{!}{\includegraphics{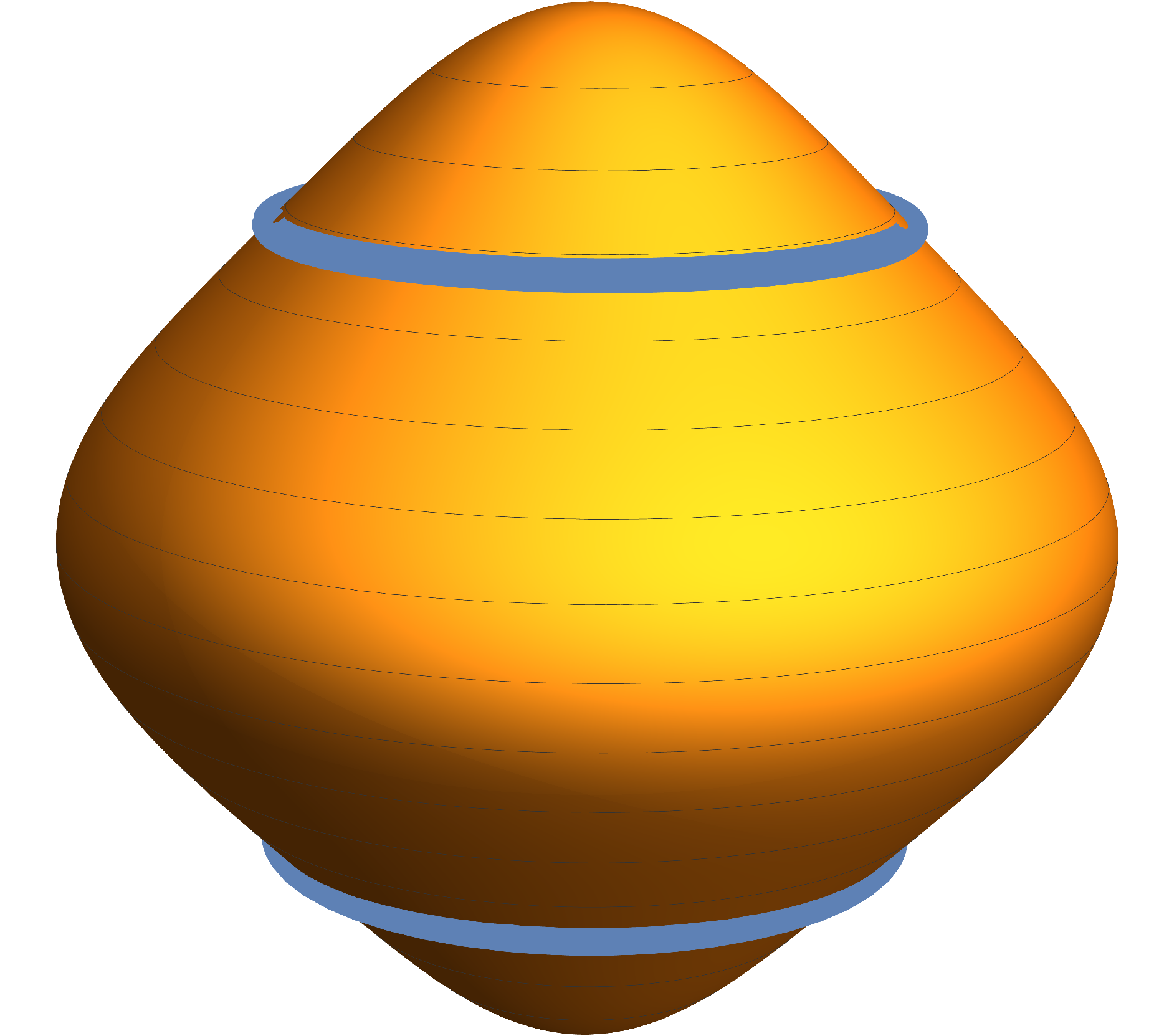}}
\end{tabular}  
\caption{The convex surface $\Sigma$ in Example \ref{ex:semidefinite} 
and the image of degenerate locus ${\rm Deg} (\alpha_\xi)$ (blue curves).
   The affine fundamental form of $\Sigma$ is positive semi-definite}
\label{fig:semi}
\end{figure}

\begin{acknowledgements}
The author thanks Professors Atsufumi Honda, Naoyuki Koike and Hiroyuki Tasaki 
for fruitful discussions and valuable comments.
This work was supported in part by JST SPRING, Grant Number JPMJSP2178.
\end{acknowledgements}

%
%
%
%

\end{document}